\documentclass[a4paper,10pt]{amsart}

\usepackage{hyperref}

\usepackage{amsfonts}
\usepackage{amsmath}
\usepackage{amssymb,amsxtra,amscd,amsthm}
\usepackage{mathrsfs}
\usepackage{mathtools}

\newtheorem{thm}{Theorem}

\newtheorem{corollary}[thm]{Corollary}
\newtheorem{proposition}[thm]{Proposition}

\theoremstyle{definition}
\newtheorem{definition}[thm]{Definition}
\newtheorem{rmk}{Remark}

\newcommand{\R}{\mathbb{R}}
\newcommand{\C}{\mathbb{C}}
\newcommand{\N}{\mathbb{N}}

\newcommand{\supp}{\mbox{supp}\,}
\newcommand{\dist}{\mbox{dist}\,}

%lange Formeln werden auf Seiten aufgeteilt
\allowdisplaybreaks

\title[An approximation theorem of Runge type]{An approximation theorem of Runge type for kernels of certain non-elliptic partial differential operators}

\author{Thomas Kalmes}
\address{Chemnitz University of Technology, Faculty of Mathematics, 09107 Chemnitz, Germany}
\email{thomas.kalmes@math.tu-chemnitz.de}

\thanks{This is a preprint version of T.\ Kalmes, An approximation theorem of Runge type for kernels of certain non-elliptic partial differential operators, Bull.\ Sci.\ math.\ (2021), \href{https://doi.org/10.1016/j.bulsci.2021.103012}{https://doi.org/10.1016/j.bulsci.2021.103012}}

\begin{document}

\begin{abstract}
	For a constant coefficient partial differential operator $P(D)$ with a single characteristic direction, such as the time-dependent free Schr\"odinger operator as well as non-degenerate parabolic differential operators like the heat operator, we characterize when open subsets $X_1\subseteq X_2$ of $\R^d$ form a $P$-Runge pair. The presented condition does not require any kind of regularity of the boundaries of $X_1$ nor $X_2$. As part of our result we prove that for a large class of non-elliptic operators $P(D)$ there are smooth solutions $u$ of the equation $P(D)u=0$ on $\R^d$ with support contained in an arbitarily narrow slab bounded by two parallel characteristic hyperplanes for $P(D)$.\\
	
	\noindent Keywords: $P$-Runge pair; Runge's approximation theorem; Lax-Malgrange theorem; Approximation in kernels of differential operators; non-degenerate parabolic differential operator\\
	
	\noindent MSC 2010: 35A35 (primary), 35E20, 35E99 (secondary)
\end{abstract}

\maketitle

\section{Introduction}

From Runge's classical theorem on rational approximation it follows that for open subsets $X_1\subseteq X_2$ of the complex plane $\C$ every function holomorphic in $X_1$ can be approximated uniformly on compact subsets of $X_1$ by functions which are holomorphic in $X_2$ if and only if $\C\backslash X_1$ has no compact connected component which is contained in $X_2$. This approximation theorem has been generalized independently by Lax \cite{Lax1956} and Malgrange \cite{Malgrange1955} from holomorphic functions, i.e.\ functions in the kernel of the Cauchy-Riemann operator, to kernels of elliptic constant coefficient partial differential operators $P(D)$ and has been generalized further to kernels of elliptic differential operators with variable coefficients by Browder \cite{Browder1962}. Since then, very little improvement has been achieved in generalizing these approximation results to kernels of non-elliptic linear partial differential operators. The analogue approximation problem for the kernel of the heat operator with open subsets $X_1$ and $X_2$ of $\R^d$ has been investigated by Jones for the special case of $X_2=\R^d$ \cite{Jones1975} and by Diaz \cite{Diaz1980} for arbitrary $X_2$. However, as noted in \cite[page 359]{GaTa08} the proof of the result in \cite{Diaz1980} contains a gap. Recently, the sufficiency part of Jones' result, i.e.\ the case of $X_2=\R^d$, has been generalized to parabolic differential operators of second order with suitable variable coefficients in \cite{EnGFPS19} where also a quantitative approximation result has been obtained for the heat equation, together with some applications.

The aim of the present paper is to give an approximation result of Runge type for kernels of constant coefficient linear partial differential operators $P(D)$ with a single characteristic direction, i.e.\ the real zeros of the principal part of the polynomial $P$ form a one dimensional subspace of $\R^d$. This class of differential operators includes, among others, the time dependent free Schr\"odinger operator as well as non-degenerate parabolic operators like the heat operator. Since the class of considered partial differential operators does not consist only of hypoelliptic operators, we consider the approximation problem for the kernels of the differential operator both in the space of smooth functions as well as in the space of distributions which are equipped with the topology of local uniform convergence of all partial derivatives and with the strong dual topology, respectively. For $P(D)$ with a single characteristic direction, we present a sufficient condition for approximability, both in the smooth setting as well as in the distributional setting, for open subsets $X_1\subseteq X_2$ for which $P(D)$ is surjective both on $C^\infty(X_1)$ and $C^\infty(X_2)$, see Theorems \ref{sufficiency for P-Runge pairs} - it should be noted that for such operators $P(D)$ a geometric characterization of those open subsets $X\subseteq\R^d$ for which $P(D)$ is surjective on $C^\infty(X)$ was recently given in \cite{Kalmes19-1} so that our result can be easily evaluated. Moreover, for a large class of non-elliptic differential operators we show in Theorem \ref{necessary condition} that the presented sufficient condition is also necessary for approximability in the distributional setting. As a consequence, for certain differential operators with a single characteristic direction, we obtain on the one hand a complete geometric characterization of the open subsets $X\subseteq\R^d$ for which distributional zero solutions on $X$ can be approximated by global zero solutions (Corollary \ref{global approximation}) and on the other hand, for the particular case of non-degenerate parabolic operators, we explicitly derive in Corollary \ref{non-degenerate parabolic} a characterization of approximability for tubular domains which are the most natural domains with respect to concrete applications and for which the evaluation of the condition is particularly satisfactory. Moreover, in order to prove the necessity of our condition, we prove for a large class of non-elliptic differential operators $P(D)$ the existence of a non-trivial $u\in C^\infty(\R^d)$ satisfying $P(D)u=0$ in $\R^d$ and with support contained in an arbitrarily narrow slab bounded by two parallel characteristic hyperplanes, see Theorem \ref{zero solution}. The latter is achieved by applying an idea of Langenbruch from \cite{Langenbruch1983} to the function $u\in C^\infty(\R^d)$ satisfying $P(D)u=0$ with support equal to a characteristic half space constructed by H\"ormander in \cite[Theorem 8.6.7]{HoermanderPDO1}.

The article is organized as follows. In section \ref{statement of the main results} we formulate our main results after presenting the framework in which we consider the approximation problem. Section \ref{proof sufficiency for P-Runge pairs} is devoted to the technical proof of a sufficient condition for approximability for pairs of sets $X_1$ and $X_2$ for which $P(D)$ is surjective on $C^\infty(X_1)$ and $C^\infty(X_2)$ and when $P(D)$ has a single characteristic direction. In section \ref{proof zero solution} we prove under suitable hypothesis on $P(D)$ the existence of a non-trivial smooth function $u$ satisfying $P(D)u=0$ as well as the above stated support condition. This is done after we have collected some results about the explicit solution to the homogeneous Cauchy problem for $P(D)$ on a non-characteristic hyperplane in section \ref{prelim}. Finally, in section \ref{proof necessity} we will then provide the proofs of the necessity of the condition together with the characterization of pairs of tubular domains admitting the approximation result for non-degenerate parabolic operators as well as the characterization of those $X$ allowing distributional approximability by global solutions.

Throughout, we use standard notation from the theory of partial differential operators, see e.g.\ \cite{HoermanderPDO1}, \cite{HoermanderPDO2}, and functional analysis, see e.g.\ \cite{MeVo1997}.

\section{Statement of the main results}\label{statement of the main results}

Since the proofs of our results are quite technical, we prefer to state the main results of this article in this section which may make the reader more willing to study their proofs. Throughout the paper, $P$ denotes a non-constant polynomial with complex coefficients in $d\geq 2$ variables of degree $m$. Recall that a hyperplane $H=\{x\in\R^d;\,\langle x,N\rangle=c\}$ in $\R^d$, where $N\in\R^d\backslash\{0\}$ and $c\in\R$, is called characteristic for $P$ if $P_m(N)=0$. We then call $\mbox{span}\{N\}$ a \textit{characteristic direction for $P$}. Moreover, we assume further that there is $\xi\in\R^d$ orthogonal to $N$ which is not characteristic for $P$. Without loss of generality we assume $N=e_1=(\delta_{k,1})_{1\leq k\leq d}\in\R^d$ (Kronecker $\delta$) and $\xi=e_d$ and we write
$$P(x_1,\ldots,x_d)=\sum_{k=0}^m Q_k(x_1,\ldots,x_{d-1})x_d^k$$
with $Q_k\in\C[X_1,\ldots,X_{d-1}],0\le k\leq m$. Since $P$ is of degree $m$, it follows that the degree of $Q_k$ is bounded by $m-k$ and since $e_d$ is supposed to be non-characteristic we have $Q_m=c\in\C\backslash\{0\}$.

For an open subset $X\subseteq\R^d$ we define
$$\mathscr{E}_P(X):=\{u\in C^\infty(X);\,P(D)u=0\mbox{ in }X\}$$
and
$$\mathscr{D}'_P(X):=\{u\in\mathscr{D}'(X);\,P(D)u=0\mbox{ in }X\},$$
where for $P(x)=\sum_{|\alpha|\leq m}a_\alpha x^\alpha$ we set as usual $P(D)u=\sum_{|\alpha|\leq m}a_\alpha(-i)^{|\alpha|}\partial^\alpha u$, $u\in \mathscr{D}'(X)$, and we denote by $P_m(x):=\sum_{|\alpha|=m}a_\alpha x^\alpha$, resp.\ by $P_m(D)$, the principal part of $P$, resp.\ of $P(D)$.

We equip $C^\infty(X)$ with its usual Fr\'echet space topology, i.e.\ the topology of uniform convergence on compact subsets of $X$ of all partial derivatives which is induced by the family of seminorms $\{\|\cdot\|_{K,l};\,K\subseteq X\mbox{ compact}, l\in\N_0\}$
$$\forall\, K\subseteq X\mbox{ compact}, l\in\N_0, u\in C^\infty(X):\,\|u\|_{K,l}:=\max_{x\in K}\max_{|\alpha|\leq l}|\partial^\alpha u(x)|$$
and we denote by $\mathscr{E}(X)$ the space $C^\infty(X)$ equipped with this Fr\'echet space topology. Then $P(D)$ is a continuous linear self mapping on $\mathscr{E}(X)$ and thus, as a closed subspace of $\mathscr{E}(X)$, the space $\mathscr{E}_P(X)$ is a Fr\'echet space. Moreover, we endow $\mathscr{D}'_P(X)$ with the relative topology of $\mathscr{D}'(X)$ which is equipped with the strong dual topology as the topological dual of $\mathscr{D}(X)$.

For hypoelliptic polynomials $P$ - by definition - for every open $X\subseteq\R^d$ the spaces $\mathscr{E}_P(X)$ and $\mathscr{D}'_P(X)$
coincide algebraically (that is, every distribution $u$ on $X$ which satisfies $P(D)u=0$ in $X$ is already a smooth function). By a result of Malgrange the spaces $\mathscr{E}_P(X)$ and $\mathscr{D}'_P(X)$ also coincide as locally convex spaces. This implies in particular, that for hypoelliptic polynomials the topology on $\mathscr{E}_P(X)$ coincides with the compact-open topology, i.e.\ the topology of uniform convergence on compact subsets of $X$. For further results about linear topological properties of $\mathscr{D}'_P(X)$ for arbitrary $P$ we refer the reader to \cite{Wengenroth14}. Moreover, for the special case of a hypoelliptic $P$ with a single characteristic direction, it has recently be shown in \cite[Theorem 18]{Kalmes19-1} that, contrary to arbitrary hypoelliptic operators \cite{Kalmes12-2}, the kernel of such operators over sets which are $P$-convex for supports automatically have property $(\Omega)$, so that these kernels are (topologically isomorphic to) a quotient spaces of $s$, the space of rapidly decreasing sequences \cite[Proposition 31.6]{MeVo1997}.

A pair of open subsets $X_1\subseteq X_2$ of $\R^d$ is called a $P$\textit{-Runge pair} if the (continuous linear) restriction map
$$r_\mathscr{E}:\mathscr{E}_P(X_2)\rightarrow \mathscr{E}_P(X_1), u\mapsto u_{|X_1}$$ 
has dense range. Since elliptic polynomials (i.e.\ $P_m(x)\neq 0$ for all $x\in\R^d\backslash\{0\}$) are hypoelliptic, the Lax-Malgrange Theorem mentioned in the introduction may then be rephrased as follows: $X_1\subseteq X_2$ are a $P$-Runge pair if and only if $\R^d\backslash X_1$ has no compact connected component which is contained in $X_2$. Moreover, the particular example of the Cauchy-Riemann operator $P(D)=\frac{1}{2}(\partial_1+i\partial_2)$ gives as $\mathscr{E}_P(X)$ the space of holomorphic functions $H(X)$ equipped with the compact open topology over $X\subseteq\C$ open.

Recall that $P(D)$ is surjective on $\mathscr{E}(X)$ if and only if $X$ is $P$-convex for supports (see \cite[Section 10.6]{HoermanderPDO2}), i.e.\ if and only if
$$\forall\,\varphi\in\mathscr{D}(X):\,\dist(\supp\varphi, \R^d\backslash X)=\dist(\supp\check{P}(D)\varphi,\R^d\backslash X),$$
where $\check{P}(\xi):=P(-\xi)$ and where $\mbox{dist}$ refers to the euclidean distance. It is well known that for elliptic $P$ every open subset $X$ of $\R^d$ is $P$-convex for supports, see e.g.\ \cite[Corollary 10.8.2]{HoermanderPDO2}. While in general $P$-convexity for supports is necessary for surjectivity of $P(D)$ on $\mathscr{D}'(X)$ (see e.g.\ \cite[Theorem 10.6.6]{HoermanderPDO2}) it is not sufficient. However, for $d=2$ $P$-convexity for supports of $X\subseteq\R^2$ already implies surjectivity on $\mathscr{D}'(X)$, as was recently shown in \cite{Kalmes11} (see also \cite{Kalmes10}). 

Our first result gives a sufficient condition for the $P$-Runge pairs consisting of sets which are $P$-convex for supports for polynomials $P$ with a single characteristic direction. 

\begin{thm}\label{sufficiency for P-Runge pairs}
	Let $P\in\C[X_1,\ldots,X_d], d\geq 2,$ be of degree $m$ such that $\{x\in\R^d;\,P_m(x)=0\}=\mbox{span}\{e_1\}$. Moreover, let  $X_1\subseteq X_2\subseteq \R^d$ be open and $P$-convex for supports. Then, among the following, $i)$ and $ii)$ are equivalent and follow from $iii)$, where
	\begin{enumerate}
		\item[i)] $X_1$ and $X_2$ is a $P$-Runge pair.
		\item[ii)] The restriction map $r_\mathscr{D'}:\mathscr{D}'_P(X_2)\rightarrow\mathscr{D}'_P(X_1), u\mapsto u_{|X_1}$ has dense range.
		\item[iii)] There is no characteristic hyperplane $H_c=\{x\in\R^d;\,x_1=c\}, c\in\R$, for $P(D)$ such that $X_2$ contains a compact connected component of $(\R^d\backslash X_1)\cap H_c$.
	\end{enumerate}
\end{thm}

The proof of Theorem \ref{sufficiency for P-Runge pairs} is given in Section \ref{proof sufficiency for P-Runge pairs} below. A geometric characterization of $P$-convexity for supports for polynomials with a single characteristic direction was recently given in \cite{Kalmes19-1}, see also Theorem \ref{characterization p-convexity} below.

The sufficient condition for $P$-Runge pairs from Theorem \ref{sufficiency for P-Runge pairs} turns out to be necessary for the restriction map $r_{\mathscr{D}'}:\mathscr{D}'(X_2)\rightarrow\mathscr{D}'(X_1)$ to have dense range for arbitrary open subsets $X_1\subseteq X_2\subseteq\R^d$ and non-elliptic $P$ of degree $m$ under a mild additional property. More precisely, we have the following theorem whose proof is given in Section \ref{proof necessity}

\begin{thm}\label{necessary condition}
	Let $P\in\C[X_1,\ldots,X_d], d\geq 2$, be of degree $m$ such that $e_1$ is characteristic for $P$ while $e_d$ is not. Moreover, assume that
	$$\forall\,x\in\R^d:\,P(x_1,\ldots,x_d)=\sum_{k=0}^m Q_k(x_1,\ldots,x_{d-1})x_d^k,$$
	with $Q_k\in\C[X_1,\ldots,X_{d-1}]$ satisfying $\deg_{x_1}(Q_k)<m-k$ for every $0\leq k\leq m-1$, where $\deg_{x_1}Q_k$ denotes the degree of the $x_1$-variable of $Q_k$.
	
	Let $X_1\subseteq X_2\subseteq\R^d$ be open such that the restriction map $$r_{\mathscr{D}'}:\mathscr{D}'_P(X_2)\rightarrow\mathscr{D}'_P(X_1), u\mapsto u_{|X_1}$$
	has dense range. Then there is no characteristic hyperplane $H_c=\{x\in\R^d;\, x_1=c\}, c\in\R$, of $P(D)$ such that $X_2$ contains a compact connected component of $(\R^d\backslash X_1)\cap H_c$.
\end{thm}

Clearly, combining Theorems \ref{sufficiency for P-Runge pairs} and \ref{necessary condition}, for polynomials $P$ with single characteristic direction $\mbox{span}\{e_1\}$ which also satisfy the mild additional hypothesis of the latter theorem, we obtain a geometric characterization of the $P$-Runge pairs of subsets $X_1\subseteq X_2\subseteq\R^d$ which are both $P$-convex for supports. We leave it to the reader to explicitly state this combination of Theorems \ref{sufficiency for P-Runge pairs} and \ref{necessary condition}. Instead, we prefer to illustrate the applicability of our results for two particular cases. In the first case, we consider approximability by solutions defined on the whole space, and the second application is to the time dependent free Schr\"odinger operator $P(D)=i\frac{\partial}{\partial t}+\Delta_x$ as well as non-degenerate parabolic operators like the heat operator $P(D)=\frac{\partial}{\partial t}-\Delta_x$ on tubular domains which from the point of view of concrete problems are the most important ones and for which the evaluation of the geometric condition stated in Theorem \ref{sufficiency for P-Runge pairs} is particularly nice. As usual, in this context we denote elements of $\R^{n+1}$ by $(t,x)$ with $t\in\R$ and $x\in\R^n$. (Thus, $d=n+1$ with $n$ spatial variables and one time variable.) Recall that for an elliptic polynomial $Q\in\C[X_1,\ldots,X_n]$ of degree $m>1$ with real coefficients in its principal part, the polynomial $P(t,x) := (-it)^r-Q(x), r\in\N$, $r<m$ odd, is semi-elliptic so that the operator $P(D)=\partial_t^r-Q(D_x)$ is hypoelliptic (see e.g. \cite[Theorem 11.1.11]{HoermanderPDO2}). The special case $r=1$ yields a non-degenerate parabolic operator.

For the special case of the heat operator, the next result is Jones' Approximation Theorem \cite{Jones1975} cited in the introduction. Clearly, for hypoelliptic $P$ we obtain a characterization of the open subsets $X\subseteq\R^d$ for which $(X,\R^d)$ is a $P$-Runge pair. 

\begin{corollary}\label{global approximation}
	Let $P\in\C[X_1,\ldots,X_d], d\geq 2$, be a polynomial of degree $m$ such that $e_1$ is characteristic for $P$ while $e_d$ is not. Moreover, assume that
	$$\forall\,x\in\R^d:\,P(x_1,\ldots,x_d)=\sum_{k=0}^m Q_k(x_1,\ldots,x_{d-1})x_d^k,$$
	with $Q_k\in\C[X_1,\ldots,X_{d-1}]$ satisfying $\deg_{x_1}(Q_k)<m-k$ for every $0\leq k\leq m-1$. Then, for an open subset $X\subseteq\R^d$, the following conditions are equivalent and imply that $(X,\R^d)$ is a $P$-Runge pair.
	\begin{itemize}
		\item[i)] $r_{\mathscr{D}'}:\mathscr{D}'_P(\R^d)\rightarrow\mathscr{D}'_P(X)$ has dense range.
		\item[ii)] There is no $c\in\R$ such that with $H_c=\{x\in\R^d;\,x_1=c\}$ the set $(\R^d\backslash X)\cap H_c$ has a compact connected component.
	\end{itemize}
\end{corollary}

The proof of the previous corollary will be given in Section \ref{proof necessity} as will be the proof of the next one.

\begin{corollary}\label{non-degenerate parabolic}
	Let $I_1\subseteq I_2\subseteq\R$ and $X_1\subseteq X_2\subseteq\R^n$ be open. Then the following hold.
	\begin{itemize}
		\item[i)] Let $Q\in\C[X_1,\ldots,X_n]$ be an elliptic polynomial of degree $m>1$ with real coefficients in its principal part and let $r\in\N$ be odd with $r<m$. Every solution $u$ of the partial differential equation
		$$\partial_t^r u-Q(D_x)u=0\mbox{ in }I_1\times X_1$$
		is the local uniform limit in $I_1\times X_1$ of a sequence of solutions of the same differential equation in $I_2\times X_2$ if and only if $\R^n\backslash X_1$ does not have a compact connected component contained in $X_2$.
		\item[ii)] Every smooth solution, resp.\ distributional solution $u$ of the time-dependent free Schr\"odinger equation
		$$i\,\partial_t u+\Delta_x u=0\mbox{ in }I_1\times X_1$$
		is the limit in $\mathscr{E}(I_1\times X_1)$, resp.\ in $\mathscr{D}'(I_1\times X_1)$, of a sequence of solutions in $\mathscr{E}(I_2\times X_2)$, resp.\ a net of solutions in $\mathscr{D}'(I_2\times X_2)$, of the same differential equation in $I_2\times X_2$ if and only if $\R^n\backslash X_1$ does not have a compact connected component contained in $X_2$.
	\end{itemize}
\end{corollary}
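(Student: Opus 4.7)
The plan is to derive Corollary~\ref{non-degenerate parabolic} by direct specialization of Theorem~\ref{characterization of P-Runge pairs}. First I would verify that both operators satisfy the hypotheses of that theorem. For $P(\tau,\xi) = -i\tau - Q(\xi)$ with $Q$ elliptic of degree $m\ge 2$, the principal part is $P_m(\tau,\xi) = -Q_m(\xi)$, whose real zero set is $\{\xi = 0\}$; relabeling the time coordinate as $x_1$, this set is exactly $\mathrm{span}\{e_1\}$, and the characteristic hyperplanes are precisely the time slices $\{(t,x)\in\R^{n+1} : t = c\}$. The analogous description holds for the Schr\"odinger polynomial, whose principal part is $\pm|\xi|^2$ with $m=2$.

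Second, I would check the sharper hypothesis $\deg_{x_1}Q_k < m-k$ for $0\le k\le m-1$ needed for the equivalence of (i)--(iii). Expanding $-i\tau - Q(\xi)$ as a polynomial in a fixed spatial coordinate $\xi_j$ (which is non-characteristic, being orthogonal to $e_1$) produces a constant term linear in $\tau$ and higher coefficients that do not depend on $\tau$ at all. Hence $\deg_\tau Q_0 = 1 < m$ since $m\ge 2$, and $\deg_\tau Q_k \le 0 < m-k$ for $1\le k\le m-1$. The Schr\"odinger case is the special instance $m=2$. Consequently all three conditions of Theorem~\ref{characterization of P-Runge pairs} are equivalent for these operators.

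Third, I would verify that $I_j\times X_j$ is $P$-convex for supports for arbitrary open $I_j\subseteq\R$ and $X_j\subseteq\R^n$, using the geometric characterization of $P$-convexity for supports for operators with a single characteristic direction obtained in \cite{Kalmes19-1} (to be quoted as Theorem~\ref{characterization p-convexity}). Since every characteristic hyperplane is a time slice, the distance condition in the characterization reduces to a condition inside single slices, which is automatically satisfied by a product set.

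The main geometric step, and the one I expect to carry the corollary, is the translation of condition (iii) into the stated compact-component condition. For a characteristic hyperplane $H_c = \{t = c\}$, the intersection $(\R^{n+1}\setminus(I_1\times X_1))\cap H_c$ equals $\R^n$ if $c\notin I_1$ and $\R^n\setminus X_1$ if $c\in I_1$, while $(I_2\times X_2)\cap H_c$ equals $X_2$ if $c\in I_2$ and is empty otherwise. The case $c\notin I_1$ contributes nothing, since $\R^n$ (for $n\ge 1$) has no compact connected component; since $I_1\subseteq I_2$, the only remaining case is $c\in I_1\subseteq I_2$, where (iii) reads exactly: no compact connected component of $\R^n\setminus X_1$ is contained in $X_2$, independently of $c$. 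For the parabolic operator, hypoellipticity (via Malgrange) identifies $\mathscr{E}_P$ with $\mathscr{D}'_P$ as locally convex spaces, so parts (i) and (ii) of the theorem collapse into the single local uniform convergence statement of the corollary; for the Schr\"odinger operator they give, respectively, the sequence version in $\mathscr{E}$ and the net version in $\mathscr{D}'$, the latter requiring nets because $\mathscr{D}'_P$ need not be metrizable. The verifications in the first three steps are essentially mechanical; the nontrivial content is the slicing argument above.
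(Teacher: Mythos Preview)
Your proposal is correct and follows essentially the same route as the paper's own proof: identify the characteristic direction as the time axis, invoke Theorem~\ref{characterization p-convexity} for $P$-convexity of the product domains, verify the degree hypothesis $\deg_{x_1}Q_k<m-k$, and then reduce condition~(iii) of Theorem~\ref{characterization of P-Runge pairs} on time slices to the stated condition on $\R^n\setminus X_1$. Your write-up is in fact more explicit than the paper's in checking the degree condition and in spelling out the slicing argument (including the harmless case $c\notin I_1$), and your closing remark on nets versus sequences in $\mathscr{D}'_P$ is a helpful clarification the paper leaves implicit.
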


For an application of the above characterization of $P$-Runge pairs, see \cite[Section 4]{Kalmes19-2}.\\

The proof of Theorem \ref{necessary condition} uses the following result which is of independent interest. As is well-known, see e.g.\ \cite[Theorem 8.6.7]{HoermanderPDO1}, if $P$ is a polynomial with characteristic vector $e_1$, there is $u\in\mathscr{E}_P(\R^d)$ with $\supp u=\{x\in\R^d;\, x_1\leq 0\}$. Under the mild additional assumptions on $P$ from Theorem \ref{necessary condition} we show that there is $u\in\mathscr{E}_P(\R^d)\backslash\{0\}$ whose support is bounded with respect to $x_1$. More precisely, the following is true.

\begin{thm}\label{zero solution}
	Let $P\in\C[X_1,\ldots,X_d], d\geq 2$, be of degree $m$ and such that $e_1$ is characteristic for $P$ while $e_d$ is not. Assume that
	$$P(x_1,\ldots,x_d)=\sum_{j=0}^m Q_j(x_1,\ldots,x_{d-1})x_d^j$$
	with $Q_k\in\C[X_1,\ldots,X_{d-1}]$ such that $\deg_{x_1}(Q_k)< m-k$ for all $0\leq k\leq  m-1$. Then, for every $a>0$ and $\varepsilon\in (0,a)$ there is $u\in \mathscr{E}_P(\R^d)$ such that
	$$[-a,-\varepsilon]\times\R^{d-1}\subseteq\supp u\subseteq [-(a+\varepsilon),0]\times\R^{d-1}$$
	and the restriction of $u$ to $(-a,-\varepsilon)\times\R^{d-1}$ is real analytic.
\end{thm}

The proof of the above theorem is given in Section \ref{proof zero solution}. Our strategy of the proof is to combine an idea due to Langenbruch from \cite{Langenbruch1983}, where for certain partial differential operators fundamental solutions with partially bounded supports have been constructed, with H\"ormander's construction of a function $v\in \mathscr{E}(\R^d)$ which satisfies $P(D)v=0$ and $\supp v=\{x\in\R^d;\, x_1\leq 0\}$. This approach uses a power series ansatz to solve the homogeneous Cauchy problem $P(D)u=0$ with Cauchy data on the non-characteristic hyperplane $\{x\in\R^d;\, x_d=0\}$, $D_d^j u(x_1,\ldots,x_{d-1},0)=g(x_1)D_d^jv(x_1,\ldots,x_{d-1},0), 0\leq j\leq m-1,$ for a suitable cut-off function $g$. We collect the necessary results on the Cauchy problem we shall employ to prove Theorem \ref{zero solution} in section \ref{prelim}.

\section{Proofs of Theorem \ref{sufficiency for P-Runge pairs}}\label{proof sufficiency for P-Runge pairs}

In this section we prove Theorem \ref{sufficiency for P-Runge pairs}. The proof uses the following general approximation result for kernels of differential operators. The equivalence of ii) and iv) is due to Tr\`eves \cite[Theorem 26.1]{Treves1967-2} which should be compared to a result due to Malgrange \cite{Malgrange1955} (see also \cite[Theorem 10.5.2]{HoermanderPDO2}). A generalization of this equivalence to the ultradifferentiable setting has been achieved by Wiechert \cite[Satz 15]{Wiechert1982}.

\begin{thm}\label{generalizing Treves}
	Let $P\in\C[X_1,\ldots,X_d]\backslash\{0\}$ and let $X_1\subseteq X_2\subseteq\R^d$ be open sets such that $X_2$ is $P$-convex for supports. Then the following are equivalent.
	\begin{itemize}
		\item[i)] $X_1$ is $P$-convex for supports and the restriction map
		$$r_\mathscr{D'}:\mathscr{D}'_P(X_2)\rightarrow\mathscr{D}'_P(X_1),u\mapsto u_{|X_1}$$
		has dense range.
		\item[ii)] $X_1$ is $P$-convex for supports and the restriction map
		$$r_{\mathscr{E}}:\mathscr{E}_P(X_2)\rightarrow\mathscr{E}_P(X_1),f\mapsto f_{|X_1}$$
		has dense range, i.e.\ $X_1$ and $X_2$ form a $P$-Runge pair.
		\item[iii)] For every $u\in\mathscr{E}'(X_2)$ with $\supp \check{P}(D)u\subseteq X_1$ it holds $\supp u\subseteq X_1$.
		\item[iv)] For every $\varphi\in\mathscr{D}(X_2)$ with $\supp \check{P}(D)\varphi\subseteq X_1$ it holds $\supp \varphi\subseteq X_1$.
	\end{itemize}
\end{thm}

As does the proof of Wiechert's generalization to Tr\`eves' Theorem in  \cite{Wiechert1982}, the proof of one implication not covered by Tr\`eves' Theorem will be based on Grothendieck-K\"othe duality, see \cite{Grothendieck1953}. We recall some facts of this theory for the reader's convenience. 

For $X\subseteq\R^d$ open and $P$-convex for supports, the topological dual space of $\mathscr{E}_P(X)$ is isomorphic to a space of certain distributional solutions $u$ of the equation $\check{P}(D)u=0$ outside a compact subset of $X$ which may depend on $u$. More precisely, recall that for a compact $K\subseteq\R^d$ an essential extension of $u\in\mathscr{D}'(\R^d\backslash K)$ is a distribution $U\in\mathscr{D}'(\R^d)$ for which $u_{|\R^d\backslash L}=U_{|\R^d\backslash L}$ for some compact $L\supseteq K$. Multiplying $u$ with a smooth function having support in $\R^d\backslash K$ which is equal to $1$ outside a compact superset of $K$ shows that every $u\in\mathscr{D}'(\R^d\backslash K)$ has essential extensions. For $u\in\mathscr{D}'_{\check{P}}(\R^d\backslash K)$ and any essential extension $U$ we have $\check{P}(D)U\in\mathscr{E}'(\R^d)$ so that $E*\check{P}(D)U$ is defined, where $E$ is a fixed fundamental solution of $\check{P}(D)$. Then $u\in\mathscr{D}'_{\check{P}}(\R^d\backslash K)$ is called regular at infinity (with respect to $E$) if for one (and then every) of its essential extensions $U$ it holds $E*\check{P}(D)U=U$. We set
\begin{align*}
	R\mathscr{D}'_{\check{P}}(\R^d\backslash K)&:=\{u\in\mathscr{D}'_{\check{P}}(\R^d\backslash K):\,u\mbox{ regular at infinity with respect to }E\},\\
	R\mathscr{E}_{\check{P}}(\R^d\backslash K)&:=R\mathscr{D}'_{\check{P}}(\R^d\backslash K)\cap \mathscr{E}(\R^d\backslash K).	
\end{align*}
Then $R\mathscr{D}'_{\check{P}}(\R^d\backslash K)$ and $R\mathscr{E}_{\check{P}}(\R^d\backslash K)$ are closed subspaces of $\mathscr{D}'(\R^d\backslash K)$ and of $\mathscr{E}(\R^d\backslash K)$, respectively. With these spaces we define
\begin{align*}
	R\mathscr{D}'_{\check{P}}(X^c)&:=\cup_{K\subseteq X\mbox{ compact}}R\mathscr{D}'_{\check{P}}(\R^d\backslash K)=\varinjlim_{K\subseteq X\mbox{ compact}}R\mathscr{D}'_{\check{P}}(\R^d\backslash K),\\
	R\mathscr{E}_{\check{P}}(X^c)&:=\cup_{K\subseteq X\mbox{ compact}}R\mathscr{E}_{\check{P}}(\R^d\backslash K)=\varinjlim_{K\subseteq X\mbox{ compact}}R\mathscr{E}_{\check{P}}(\R^d\backslash K).
\end{align*}
Then, in case $X$ is $P$-convex for supports, it follows that $R\mathscr{D}'_{\check{P}}(X^c)$ equipped with the inductive limit topology and the dual space $\mathscr{E}_P(X)'$ of $\mathscr{E}_P(X)$ equipped with the strong topology are topologically isomorphic via
\begin{equation}\label{Grothendieck-Koethe isomorphism}
	\Phi:R\mathscr{D}'_{\check{P}}(X^c)\rightarrow \mathscr{E}_P(X)',\langle\Phi(u), f\rangle:=\langle \check{P}(D)(\psi u),f\rangle,
\end{equation}
where the duality bracket on the right hand side denotes the usual duality between $\mathscr{E}'(X)$ and $\mathscr{E}(X)$. Here for $u\in\mathscr{D}'_{\check{P}}(\R^d\backslash K)$ the function $\psi\in \mathscr{E}(\R^d)$ is arbitrary as long as $\psi$ vanishes in a neighborhood of $K$ and is equal to 1 outside a compact subset $L\subseteq X$. It should be noted that $\check{P}(D)(\psi u)$ has compact support but usually $\psi u$ does not.

Moreover, again in case $X$ is $P$-convex for supports, equipping $R\mathscr{E}_{\check{P}}(X^c)$ with the inductive limit topology and the dual space $\mathscr{D}'_P(X)'$ of $\mathscr{D}'_P(X)$ with the strong topology
$$\Psi: R\mathscr{E}_{\check{P}}(X^c)\rightarrow\mathscr{D}'_P(X)',\langle\Psi(f),u\rangle:=\langle\check{P}(D)(\psi f),u\rangle$$
is a topological isomorphism, where the duality bracket on the right hand side denotes the usual duality between $\mathscr{D}(X)$ and $\mathscr{D}'(X)$ and where again $\psi\in \mathscr{E}(\R^d)$ is as above.\\

\begin{proof}[Proof of Theorem \ref{generalizing Treves}]
	For the equivalence of ii) and iii), see \cite[Theorem 26.1]{Treves1967-2}. We first show that i) implies iv). Fix $\varphi\in\mathscr{D}(X_2)$ with $\supp \check{P}(D)\varphi\subseteq X_1$, i.e.\ $\check{P}(D)\varphi\in\mathscr{D}(X_1)$. For every $u\in \mathscr{D}'_P(X_2)$ we have
	$$\langle \check{P}(D)\varphi,r_\mathscr{D'}(u)\rangle=\langle \check{P}(D)\varphi, u\rangle=\langle\varphi,P(D)u\rangle=0.$$
	Thus, $\check{P}(D)\varphi$ vanishes on the range of $r_\mathscr{D'}$ which is dense in $\mathscr{D}'_P(X_1)$ by hypothesis, so that $\check{P}(D)\varphi_{|\mathscr{D}'_P(X_1)}=0$. Since $X_1$ is $P$-convex for supports,  a result of Floret \cite[page 232]{Floret1980} ensures that $\check{P}(D)\big(\mathscr{D}(X_1)\big)$ is closed in $\mathscr{D}(X_1)$. Therefore, for the polar of $\mathscr{D}'_P(X_1)$ with respect to the dual pair $(\mathscr{D}(X_1),\mathscr{D}'(X_1))$ we have
	$$\mathscr{D}'_P(X_1)^\circ=\overline{\check{P}(D)\big(\mathscr{D}(X_1)\big)}^{\mathscr{D}(X_1)}=\check{P}(D)\big(\mathscr{D}(X_1)\big).$$
	Since the dual space of $\mathscr{D}'_P(X_1)$ is canonically isomorphic to the quotient space
	$$\mathscr{D}(X_1)/\mathscr{D}'_P(X_1)^\circ=\mathscr{D}(X_1)/\check{P}(D)\big(\mathscr{D}(X_1)\big),$$
	$\check{P}(D)\varphi_{|\mathscr{D}'_P(X_1)}=0$ implies $\check{P}(D)\varphi\in \check{P}(D)(\mathscr{D}(X_1))$, i.e.\ there is $u\in\mathscr{D}(X_1)$ with $\check{P}(D)\varphi=\check{P}(D)u$. Since $\check{P}(D)$ is injective on $\mathscr{D}(\R^d)$ we conclude $\varphi=u$ and thus $\supp\varphi\subseteq X_1$.
	
	Next, we show that iv) implies iii). Let $u\in\mathscr{E}'(X_2)$ with $\check{P}(D)u\subseteq X_1$. Let $\phi\in\mathscr{D}(\R^d)$ be non-negative with support contained in the open unit ball about the origin and $\int\phi(x) dx=1$. With $\phi_\varepsilon(x):=\varepsilon^{-d}\phi(x/\varepsilon)$ it holds that $v*\phi_\varepsilon\rightarrow v$ in $\mathscr{E}'(\R^d)$ as $\varepsilon\rightarrow 0$ with $v*\phi_\varepsilon\in\mathscr{D}\big(\supp v+B(0,\varepsilon)\big)$, $v\in\mathscr{E}'(\R^d)$. Since $\check{P}(D)\big(u*\phi_\varepsilon\big)=\check{P}(D)u*\phi_\varepsilon$, iii) follows from iv). Trivially, iii) implies iv).
	
	In order to finish the proof, it remains to show that ii) implies i). If ii) holds we only have to show that the transposed of the restriction $r_\mathscr{D'}$ is injective. Since $X_1$ and $X_2$ are $P$-convex for supports, due to the Grothendieck-K\"othe duality, this is equivalent to the injectivity of the inclusion
	$$j_{\mathscr{E}}:R\mathscr{E}_{\check{P}}(X_1^c)\hookrightarrow R\mathscr{E}_{\check{P}}(X_2^c), f\mapsto f.$$
	But ii) implies the injectivity of
	$$j_\mathscr{D'}:R\mathscr{D}'_{\check{P}}(X_1^c)\hookrightarrow R\mathscr{D}'_{\check{P}}(X_2^c), u\mapsto u$$
	and since $j_{\mathscr{D'}|R\mathscr{E}_{\check{P}}(X_1^c)}=j_{\mathscr{E}}$, i) follows. This completes the proof.
\end{proof}

Apart from Theorem \ref{generalizing Treves} the recent geometrical characterization of $P$-convexity for supports for polynomials with a single characteristic direction obtained in \cite{Kalmes19-1} will be needed to prove the sufficiency of iii) for i) and ii) in Theorem \ref{sufficiency for P-Runge pairs}. Recall that a real valued continuous function $f$ on an open subset $X$ of $\R^d$ is said to satisfy the minimum principle in a closed set $F$ of $\R^d$ if for every compact set $K\subseteq F\cap X$ it holds
$$\min_{x\in K}f(x)=\min_{x\in\partial_F K}f(x),$$
where $\partial_F K$ is the boundary of $K$ as a subset of $F$. Combining \cite[Corollary 5]{Kalmes19-1} and \cite[Lemma 4]{Kalmes19-1} we have the following.

\begin{thm}\label{characterization p-convexity}
	Let $P\in\C[X_1,\ldots,X_d], d\geq 2,$ have a single characteristic direction. For $X\subseteq\R^d$ open let
	$$d_X:X\rightarrow\R, d_X(x):=\inf\{|x-y|;\,y\in\R^d\backslash X\}.$$
	Then the following are equivalent.
	\begin{enumerate}
		\item[i)] $X$ is $P$-convex for supports.
		\item[ii)] $d_X$ satisfies the minimum principle in every characteristic hyperplane for $P$.
		\item[iii)] For each compact subset $K\subseteq X$ and every
		$$x\in \{y\in X;\,d_X(y)<\dist(K,X^c)\}$$
		there is $\gamma:[0,\infty)\rightarrow X$ a continuous and piecewise continuously differentiable curve with $\gamma(0)=x, \gamma'(t)\in \{y\in\R^d;\,P_m(y)=0\}^\perp$ whenever $\gamma$ is differentiable in $t$, and $ \gamma([0,\infty))\cap K=\emptyset$ such that $$\liminf_{t\rightarrow\infty}\dist(\gamma(t),\partial_\infty X)=0,$$
		where $\partial_\infty X$ denotes the boundary of $X$ in the one point compactification of $\R^d$.
	\end{enumerate}
\end{thm} 

Now, we are ready to prove Theorem \ref{sufficiency for P-Runge pairs}.

\begin{proof}[Proof of Theorem \ref{sufficiency for P-Runge pairs}]
	By Theorem \ref{generalizing Treves}, i) and ii) are equivalent. Thus, it remains to show that iii) implies i) and ii). In order to do so, we will apply Theorem \ref{generalizing Treves}.
	
	We denote by $W$ the orthogonal complement in $\R^d$ of the one dimensional subspace $\{x\in\R^d;\,P_m(x)=0\}$. Then, every characteristic hyperplane for $P$ is of the form $x+W$ with $x\in\R^d$. For $X\subseteq\R^d$ we denote by $\partial_\infty X$ the boundary of $X$ in the one-point compactification of $\R^d$, thus $\infty\in\partial_\infty X$ whenever $X$ is an unbounded subset of $\R^d$. For $y,z\in\R^d$ we denote by $[y,z]$ the convex hull of $\{y,z\}$.
	
	In view of Theorem \ref{generalizing Treves} we have to show that $\supp\varphi\subseteq X_1$ for every $\varphi\in\mathscr{D}(X_2)$ with $\supp\check{P}(D)\varphi\subseteq X_1$. Thus, let $\varphi\in\mathscr{D}(X_2)$ be such that $K:=\supp\check{P}(D)\varphi\subseteq X_1$. Moreover, we fix
	$$x\in\{y\in X_1;\,\dist(y,X_1^c)<\dist(K,X_1^c)\}.$$
	We shall show that there is a continuous and piecewise continuously differentiable curve $\alpha:[0,\infty)\rightarrow X_2$ satisfying
	\begin{enumerate}
		\item[$\alpha\, 1)$] $\alpha(0)=x$,
		\item[$\alpha\, 2)$] $\alpha([0,\infty))\cap K=\emptyset$,
		\item[$\alpha\, 3)$] $\lim_{t\rightarrow\infty}\dist(\alpha(t),\partial_\infty X_2)=0$,
		\item[$\alpha\, 4)$] $\alpha'(t)\in W$ for every $t\in[0,\infty)$ where $\alpha$ is continuously differentiable.
	\end{enumerate}
	
	Before we prove that such a curve $\alpha$ exists, let us show how the theorem follows from this. Because $\supp \varphi$ is a compact subset of $X_2$ it follows from $\alpha\,3)$ that there is $T>0$ with $\alpha(T)\notin\supp \varphi$. Moreover, using $\alpha\,2)$ we can find $\varepsilon>0$ such that the open ball $B(\alpha(T),\varepsilon)$ of radius $\varepsilon$ about $\alpha(T)$ does not intersect $\supp \varphi$, $\alpha([0,T])+B(0,\varepsilon)\subseteq X_2$ and $K\cap(\alpha([0,T])+B(0,\varepsilon))=\emptyset$, where $\alpha([0,T])+B(0,\varepsilon)=\{y+z;y\in\alpha([0,T]),z\in B(0,\varepsilon)\}$.
	
	Next, we choose $0=t_0<t_1<\ldots<t_k=T$ such that for each $j=1,\ldots,k$ the restriction of $\alpha$ to $[t_{j-1},t_j]$ is continuously differentiable and
	\[\left|\int_{t_{j-1}}^{t_j}\alpha'(t)dt\right|<\frac{\varepsilon}{2}.\]
	We define
	\[f:[0,k]\rightarrow\R^d,s\mapsto\alpha\left(t_{\lfloor s\rfloor}\right)+\left(s-\lfloor s\rfloor\right)\int_{t_{\lfloor s\rfloor}}^{t_{\lfloor s\rfloor}+1}\alpha'(t)dt,\]
	where $\lfloor s\rfloor$ denotes the integer part of $s$. Then $f$ is a polygonal curve in $x+W$ by $\alpha\,1)$ and $\alpha\,4)$. Obviously, $f([j-1,j])=[\alpha(t_{j-1}),\alpha(t_j)], j=1,\ldots,k$. Moreover, due to the choice of $\varepsilon$, we have $f([0,k])+B(0,\frac{\varepsilon}{2})\subseteq X_2\backslash K$.
	
	For $N\in\{y\in\R^d;\,P_m(y)=0\}\backslash\{0\}$ and $c\in\R$ let
	\[H_{N,c}=\{y\in\R^d;\,\langle y,N\rangle=c\}\]
	be the corresponding characteristic hyperplane for $P$. Since $[\alpha(t_{k-1}),\alpha(T)]\subseteq x+W$ and $N\in W^\perp$ it follows that $H_{N,c}$ intersects $B(\alpha(T),\varepsilon)$ whenever $H_{N,c}$ intersects $[\alpha(t_{k-1}),\alpha(T)]+B(0,\varepsilon)$. By the choice of $\varepsilon$ we have $\varphi_{|B(\alpha(T),\varepsilon)}=0$ so that by \cite[Theorem 8.6.8]{HoermanderPDO1} $\varphi$ vanishes in $[\alpha(t_{k-1}),\alpha(T)]+B(0,\varepsilon)$.
	
	Repetition of this argument yields that $\varphi$ vanishes in $f([0,k])+B(0,\varepsilon)$, in particular $x=\alpha(0)=f(0)$ does not belong to $\supp \varphi$. Since $x$ was chosen arbitrarily from the set
	$$\{y\in X_1;\,\dist(y,X_1^c)<\dist(K,X_1^c)\}$$
	we conclude with the aid of the Theorem of Supports (see e.g.\ \cite[Theorem 7.3.2]{HoermanderPDO1}), which states that the convex hulls of $\supp\varphi$ and $\supp\check{P}(D)\varphi=K$ coincide,
	$$\supp\varphi\subseteq\big(\{y\in X_1;\,\dist(y,X_1^c)\geq\dist(K,X_1^c)\}\cap ch K\big)\cup\big((X_2\backslash X_1)\cap\supp\varphi\big),$$
	where $ch K$ denotes the convex hull of $K$. Setting
	$$L_1:=\{y\in X_1;\,\dist(y,X_1^c)\geq\dist(K,X_1^c)\}\cap ch K$$
	and $L_2=:(X_2\backslash X_1)\cap\supp\varphi$, $L_1$ and $L_2$ are disjoint compact subsets of $X_2$ with $L_1\subseteq X_1$. Since $\supp\varphi\subseteq L_1\cup L_2$ we can decompose $\varphi=\varphi_1+\varphi_2$ with $\varphi_1\in\mathscr{D}(X_1)$ and $\varphi_2\in\mathscr{D}(X_2\backslash X_1)$. Because $\check{P}(D)\varphi_1,\check{P}(D)\varphi\in\mathscr{D}(X_1)$ it follows
	$$\check{P}(D)\varphi_2=\check{P}(D)(\varphi-\varphi_1)\in\mathscr{D}(X_2\backslash X_1)\cap\mathscr{D}(X_1)=\{0\}$$
	which together with the injectivity of $\check{P}(D)$ on $\mathscr{D}(\R^d)$ implies $\varphi_2=0$. This finally yields $\varphi=\varphi_1\in\mathscr{D}(X_1)$ so that Theorem \ref{generalizing Treves} gives the desired result once the existence of the curve $\alpha$ is verified.
	
	We denote by $C$ the connected component of $(X_1\backslash K)\cap (x+W)$ which contains $x$. As an open subset of the pathwise connected set $x+W$ the set $C$ is locally pathwise connected (and connected) hence pathwise connected.
	
	We precede by distinguishing two cases. First, let us assume that $C$ is unbounded. Since $C$ is pathwise connected there is a continuous piecewise continuously differentiable curve $\tilde{\alpha}:[0,\infty)\rightarrow C$ such that $\tilde{\alpha}(0)=x$ and $\lim_{t\rightarrow\infty}|\tilde{\alpha}(t)|=\infty$. With this $\tilde{\alpha}$ one easily constructs a curve $\alpha$ as desired, taking into account that $C\subseteq (x+W)$ implies $\tilde{\alpha}'(t)\in W$ for every $t$ where $\tilde{\alpha}$ is differentiable.
	
	Next, let us assume that $C$ is bounded. Since $X_1$ is assumed to be $P$-convex for supports it follows from \cite[Lemma 4, Corollary 5]{Kalmes19-1}, see Theorem \ref{characterization p-convexity} above, that there is a continuous and piecewise continuously differentiable curve $\gamma:[0,\infty)\rightarrow X_1$ satisfying
	\begin{enumerate}
		\item[i)] $\gamma(0)=x$,
		\item[ii)] $\gamma([0,\infty))\cap K=\emptyset$,
		\item[iii)] $\gamma'(t)\in W$ for each $t$ where $\gamma$ is differentiable,
		\item[iv)] $\liminf_{t\rightarrow\infty}\dist(\gamma(t),\partial_\infty X_1)=0$.
	\end{enumerate} 
	From properties i)-iii) of $\gamma$ it follows that $\gamma([0,\infty))\subseteq (X_1\backslash K)\cap (x+W)$ implying $\gamma([0,\infty))\subseteq C$. Since we assumed $C$ to be bounded, property iv) of $\gamma$ yields in fact
	\begin{enumerate}
		\item[iv')] $\liminf_{t\rightarrow\infty}\dist(\gamma(t),\partial X_1)=0$.
	\end{enumerate}
	Let $\xi\in\partial X_1$ and $(t_n)_{n\in\N}$ be a strictly increasing sequence in $[0,\infty)$ tending to infinity such that $\lim_{n\rightarrow\infty}\gamma(t_n)=\xi$. From $\gamma([0,\infty))\subseteq x+W$ we conclude $\xi\in x+W$. Next, let $\varepsilon>0$ be such that $B[\xi,\varepsilon]$, the closed ball in $\R^d$ about $\xi$ with radius $\varepsilon$, does not intersect $K$. We choose $T>0$ such that $\gamma(T)\in B(\xi,\varepsilon)$, the open $\varepsilon$-ball in $\R^d$ about $\xi$, and we set
	$$I:=\{\lambda\in[0,1];\,(1-\lambda)\gamma(T)+\lambda\xi\in\partial X_1\}.$$
	Then $1\in I$ and
	$$\forall\,\lambda\in I:\,(1-\lambda)\gamma(T)+\lambda\xi\in B(\xi,\varepsilon)\cap (x+W).$$
	From $\gamma(T)\in X_1$ it follows
	$$\lambda_0:=\inf I>0.$$
	Moreover
	$$\xi_0:=(1-\lambda_0)\gamma(T)+\lambda_0\xi\in\partial X_1$$
	as well as
	$$\forall\,\lambda\in[0,\lambda_0):\,(1-\lambda)\gamma(T)+\lambda\xi\in X_1\cap B(\xi,\varepsilon)\cap(x+W)\subseteq (X_1\backslash K)\cap (x+W).$$
	Then
	\begin{align*}
		&\tilde{\gamma}:[0,T+\lambda_0]\rightarrow\big((X_1\backslash K)\cup\{\xi_0\}\big)\cap\big(x+W\big),\\
		&\tilde{\gamma}(t):=\begin{cases}
			\gamma(t), & t\leq T,\\ \big(1-(t-T)\big)\gamma(T)+(t-T)\xi_0, &t>T
		\end{cases}
	\end{align*}
	is a well-defined, continuous and piecewise continuously differentiable curve such that $\tilde{\gamma}(0)=x$, $\tilde{\gamma}(T+\lambda_0)=\xi_0\in\partial X_1$, $\tilde{\gamma}([0,T+\lambda_0))\subseteq(X_1\backslash K)\cap (x+W)$, and $\tilde{\gamma}'(t)\in W$ for every $t$ where $\tilde{\gamma}$ is differentiable. In case $\xi_0\in\partial X_2$ the curve
	$$\alpha:[0,\infty)\rightarrow(X_2\backslash K)\cap (x+W),\alpha(t):=\tilde{\gamma}\big(\frac{t}{t+1}(T+\lambda_0)\big)$$
	is as desired. In case $\xi_0\notin\partial X_2$ we denote by $C_0$ the connected component of $$(\R^d\backslash X_1)\cap(x+W)=(\R^d\backslash X_1)\cap(\xi_0+W)$$
	which contains $\xi_0$. It follows from the local pathwise connectedness of $(\R^d\backslash K)\cap(\xi_0+W)$ that $C_0$ is pathwise connected.
	
	In case $C_0$ is unbounded, there is thus a continuous and piecewise continuously differentiable curve
	$$\beta:[0,\infty)\rightarrow(\R^d\backslash K)\cap (x+W)$$
	with $\beta(0)=x$ and $\lim_{t\rightarrow\infty}|\beta(t)|=\infty$. If the range of $\beta$ does not intersect $\partial X_2$ we define $\tilde{\beta}:=\beta$ otherwise we set
	$$s:=\inf\{t>0;\,\beta(t)\in\partial X_2\}$$
	so that $s>0$ since $\xi_0\notin\partial X_2$ and
	$$\tilde{\beta}:[0,\infty)\rightarrow (X_2\backslash K)\cap (x+W),\tilde{\beta}(t):=\beta(\frac{t}{1+t}s).$$
	In both cases we define with $\tilde{\gamma}$ from above and $\tilde{\beta}$
	$$\alpha:[0,\infty)\rightarrow(X_2\backslash K)\cap(x+W),\alpha(t):=\begin{cases}
		\tilde{\gamma}(t),&t\leq T+\lambda_0\\ \tilde{\beta}(t-T-\lambda_0),&t>T+\lambda_0,
	\end{cases}$$
	which satisfies all requirements.
	
	Finally, in case $C_0$ is bounded, $C_0$ is compact. By the hypothesis on the compact connected components of the intersection of $\R^d\backslash X_1$ with characteristic hyperplanes it follows that $C_0$ intersects $\R^d\backslash X_2$. Fix $v\in C_0\cap(\R^d\backslash X_2)$. Since $C_0$ is a pathwise connected subset of $(\R^d\backslash X_1)\cap (x+W)$ there is a continuous and piecewise continuously differentiable curve
	$$\beta:[0,1]\rightarrow C_0\subseteq(\R^d\backslash K)\cap (x+W)$$
	with $\beta(0)=\xi_0$, $\beta(1)=v$, and $\beta'(t)\in W$ wherever $\beta$ is differentiable. Again, we set
	$$s:=\inf\{t>0;\,\beta(t)\in\partial X_2\}$$
	so that again $s>0$ and again we define
	$$\tilde{\beta}:[0,\infty)\rightarrow (X_2\backslash K)\cap (x+W),\tilde{\beta}(t):=\beta(\frac{t}{1+t}s).$$
	Then, again
	$$\alpha:[0,\infty)\rightarrow(X_2\backslash K)\cap(x+W),\alpha(t):=\begin{cases}
		\tilde{\gamma}(t),&t\leq T+\lambda_0\\ \tilde{\beta}(t-T-\lambda_0),&t>T+\lambda_0,
	\end{cases}$$
	fulfills all desired properties in the last case that remained which finally proves the theorem.
\end{proof}

\section{Some auxiliary results on the Cauchy problem}\label{prelim}

The purpose of this section is to collect some results on the non-characteristic Cauchy problem which will be used in the proof of Theorem \ref{zero solution} in section \ref{proof zero solution} below. We assume that these results are known but since we could not find any reference with a correct representation of the solution of the non-characteristic Cauchy problem given in Remark \ref{explicit formula} below, we include its derivation here for the reader's convenience.

Let $P$ be such that $e_d\in\R^d$ is not characteristic for $P$ and we write $P(x_1,\ldots,x_d)=\sum_{k=0}^m Q_k(x_1,\ldots,x_{d-1})x_d^k$ with $Q_k\in\C[X_1,\ldots,X_{d-1}],0\le k\leq m$, where the degree of $Q_k$ is bounded by $m-k$. Since $e_d$ is non-characteristic for $P$ and since $P$ is of degree $m$, we have $Q_m=c\in\C\backslash\{0\}$ and we assume without loss of generality that $Q_m=1$.\footnote{Note added in proof: While in view of Theorem \ref{zero solution} we are interested in the non-characteristic Cauchy problem, it should be noted that the results of the current section hold true for polynomials $P$, not necessarily of degree $m$ and for which $e_d$ need not be a non-characteristic vector, as long as $P$ can be written as $P(x_1,\ldots,x_d)=\sum_{k=0}^m Q_k(x_1,\ldots,x_{d-1})x_d^k$ with $Q_k\in\mathbb{C}[X_1,\ldots,X_{d-1}]$ and $Q_m=c\in\mathbb{C}\backslash \{0\}$ and $m\in\N$.} 

As mentioned at the end of section \ref{statement of the main results}, we will achieve our objective to construct a zero solution for $P(D)$ with support bounded with respect to the $x_1$-axis by explicitly solving a certain Cauchy problem for $P(D)$ with Cauchy data on the non-characteristic hyperplane $\{x\in\R^d;\,x_d=0\}$. In order to formulate the solution in a convenient way we introduce the following notion.

\begin{definition}
	For $Y\subseteq\R^{d-1}$ open we define recursively for $l\in \N_0$
	$$\mathscr{C}_l:\mathscr{E}(Y)\rightarrow \mathscr{E}(Y), f\mapsto\begin{cases}
		0, &l\in\{0,\ldots,m-2\},\\
		f, &l=m-1,\\
		-\sum_{k=0}^{m-1}Q_k(D)\mathscr{C}_{k+l-m}(f), &l\geq m.
	\end{cases}$$
	Thus, by definition, $\mathscr{C}_{m+l}(f)+\sum_{k=0}^{m-1}Q_k(D)\mathscr{C}_{k+l}(f)=0$ for all $l\in\N_0, f\in \mathscr{E}(Y)$.
\end{definition} 

A straight-forward calculation yields the next result.

\begin{proposition}\label{preparation boundary value}
	Let $Y\subseteq\R^{d-1}$ be open and $h_0,\ldots,h_{m-1}\in \mathscr{E}(Y)$. Then
	$$\forall\,0\leq s\leq m-1:\,\sum_{j=0}^s\sum_{k=m-1-s}^{m-1-j}Q_{j+k+1}(D)\mathscr{C}_{k+s}(h_j)=h_s.$$
\end{proposition}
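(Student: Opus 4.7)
My plan is to peel off the $j = s$ boundary term, which will account for the $h_s$ on the right-hand side, and then show that every remaining inner sum vanishes thanks to the defining recursion of $\mathscr{C}_l$.

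First, at $j = s$ the inner sum reduces to the single term $Q_m(D)\mathscr{C}_{m-1}(h_s)$, and since $Q_m = 1$ and $\mathscr{C}_{m-1}$ is the identity, this contributes exactly $h_s$. Hence the proposition reduces to showing that for every $0 \leq j < s$ the inner sum $A_s^{(j)} := \sum_{k = m-1-s}^{m-1-j} Q_{j+k+1}(D)\, \mathscr{C}_{k+s}(h_j)$ equals zero.

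The key step is a reindexing. Setting $k' = j + k + 1$ and $r := s - j \geq 1$ rewrites the inner sum as $A_s^{(j)} = \sum_{k'=m-r}^{m} Q_{k'}(D)\,\mathscr{C}_{k'+r-1}(h_j)$, which I recognise as the high-index tail of the identity $\sum_{k'=0}^{m} Q_{k'}(D)\,\mathscr{C}_{k'+r-1}(h_j) = 0$ — namely the second displayed relation in the definition of $\mathscr{C}_l$ applied with parameter $l = r-1 \geq 0$. Subtracting, I get $A_s^{(j)} = -\sum_{k'=0}^{m-r-1} Q_{k'}(D)\,\mathscr{C}_{k'+r-1}(h_j)$, and since $0 \leq j < s \leq m-1$ forces $r \in \{1, \ldots, m-1\}$, every subscript $k'+r-1$ in the remaining sum lies in $\{r-1, \ldots, m-2\} \subseteq \{0, \ldots, m-2\}$. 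On this range $\mathscr{C}_{k'+r-1}$ vanishes by the very definition of $\mathscr{C}_l$, and so $A_s^{(j)} = 0$.

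The only real obstacle is the bookkeeping of the index shift: one must substitute carefully so that the inner sum lines up exactly with the high-index tail of the recursion, and verify that the range $r \in \{1, \ldots, m-1\}$ forced by $0 \leq j < s \leq m-1$ simultaneously guarantees that the recursion is applicable (i.e.\ $r-1 \geq 0$) and that the complementary prefix sum involves only the indices $\{0, \ldots, m-2\}$ on which $\mathscr{C}_l$ is zero (i.e.\ $m-r-1 \geq 0$). Everything else is a direct application of the definition.
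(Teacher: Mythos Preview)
Your argument is correct and is essentially the same as the paper's: both isolate the $j=s$ term to produce $h_s$, then for each fixed $j<s$ reindex the inner sum via $k'=j+k+1$ and combine the recursion $\sum_{k'=0}^{m}Q_{k'}(D)\mathscr{C}_{k'+l}=0$ with the vanishing of $\mathscr{C}_l$ for $l\leq m-2$ to conclude that the remaining inner sum is zero. The only cosmetic difference is that the paper first strips off the $k'=m$ summand and then extends the truncated sum downward by adjoining zero terms before invoking the recursion, whereas you keep the $k'=m$ term in the sum, recognise the whole thing as the high-index tail of the recursion identity, and subtract to leave a prefix of zero terms.
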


%\begin{proof}
%	Taking into account that $Q_m=1$, we have
%	\begin{align*}
%	&\sum_{j=0}^s\sum_{k=m-1-s}^{m-1-j}Q_{j+k+1}(D)\mathscr{C}_{k+s}(h_j)\\
%	=&\mathscr{C}_{m-1}(h_s)+\sum_{j=0}^{s-1}\Big(\mathscr{C}_{m-1-j+s}(h_j)+\sum_{k=m-1-s}^{m-2-j}Q_{j+k+1}(D)\mathscr{C}_{k+s}(h_j)\Big).
%	\end{align*}
%	Setting $r:=j+k+1$ it follows for $r<m-1-(s-1-j)$ that $\mathscr{C}_{s-1-j+r}(h_j)=0$ so that we continue
%	\begin{align*}
%	=&\mathscr{C}_{m-1}(h_s)+\sum_{j=0}^{s-1}\Big(\mathscr{C}_{m-1-j+s}(h_j)+\sum_{k=m-1-s}^{m-2-j}Q_{j+k+1}(D)\mathscr{C}_{k+s}(h_j)\Big)\\
%	=&h_s+\sum_{j=0}^{s-1}\Big(\mathscr{C}_{m+s-1-j}(h_j)+\sum_{r=0}^{m-1}Q_r(D)\mathscr{C}_{r+s-1-j}(h_j)\Big)=h_s,
%	\end{align*}
%	because $\mathscr{C}_{m+l}(h_j)+\sum_{r=0}^{m-1}Q_r(D)\mathscr{C}_{r+l}(h_j)=0$ for all $l\in\N_0$.
%\end{proof}

\begin{definition}
	Let $Y\subseteq\R^{d-1}$ be open. We define
	$$L_n:\mathscr{E}(Y)\rightarrow \mathscr{E}(Y\times\R), L_n(h)(x,x_d):=\sum_{l=0}^n\mathscr{C}_{l}(h)(x)\frac{(i\,x_d)^l}{l!}.$$
	Obviously, $L_n$ is a linear and continuous mapping.
\end{definition}

Keeping in mind that $Q_m=1$ and that $\mathscr{C}_{m+l}(h_j)+\sum_{r=0}^{m-1}Q_r(D)\mathscr{C}_{r+l}(h_j)=0$ for all $l\in\N_0$, with the aid of Proposition \ref{preparation boundary value} one easily derives the next proposition.

\begin{proposition}\label{solution Cauchy}
	Let $Y\subseteq\R^{d-1}$ be open, $h_0,\ldots,h_{m-1}\in \mathscr{E}(Y)$ be such that for all $0\leq j\leq m-1$ the sequence $(L_n(h_j))_n$ converges in $\mathscr{E}(Y\times\R)$, $L(h_j):=\lim_{n\rightarrow\infty}L_n(h_j)=\sum_{l=0}^\infty\mathscr{C}_{l}(h_j)\frac{(ix_d)^l}{l!}$.
	
	Then, $u:=\sum_{j=0}^{m-1}\sum_{k=0}^{m-1-j}Q_{j+k+1}(D)D_d^kL(h_j)\in \mathscr{E}_P(Y\times R)$ and $D_d^su(\cdot,0)=h_s(\cdot), 0\leq s\leq m-1.$
\end{proposition}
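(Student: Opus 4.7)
The plan is a direct verification: the series definition makes $L(h_j)$ a power series in $x_d$ with coefficients in $\mathscr{E}(Y)$, and the recursion defining $\mathscr{C}_l$ is precisely what makes $P(D)L(h_j) = 0$, while Proposition \ref{preparation boundary value} is tailored to produce the correct Cauchy data. First I would note that since $L_n(h_j)\to L(h_j)$ in $\mathscr{E}(Y\times\R)$, every partial derivative of $L(h_j)$ is obtained by termwise differentiation of the defining series. Using $D_d\bigl((ix_d)^l/l!\bigr) = (ix_d)^{l-1}/(l-1)!$ for $l\geq 1$, this yields
\begin{equation*}
D_d^k L(h_j)(x,x_d) = \sum_{l=0}^\infty \mathscr{C}_{l+k}(h_j)(x)\,\frac{(ix_d)^l}{l!},
\end{equation*}
and in particular $D_d^k L(h_j)(x,0) = \mathscr{C}_k(h_j)(x)$.

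Next I would verify $P(D)L(h_j)=0$. Writing $P(D)=\sum_{k=0}^m Q_k(D)D_d^k$ and applying the continuous operator $Q_k(D)$ termwise (legitimate because convergence in $\mathscr{E}(Y\times\R)$ preserves convergence of all derivatives on compacta), I obtain
\begin{equation*}
P(D)L(h_j)(x,x_d) = \sum_{l=0}^\infty \frac{(ix_d)^l}{l!}\Bigl(\mathscr{C}_{m+l}(h_j) + \sum_{k=0}^{m-1} Q_k(D)\mathscr{C}_{k+l}(h_j)\Bigr)(x),
\end{equation*}
where I have used $Q_m=1$. Each inner bracket vanishes by the very definition of $\mathscr{C}_l$. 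Since $u$ is a finite linear combination of expressions of the form $Q_{j+k+1}(D)D_d^k L(h_j)$, and constant-coefficient operators mutually commute, $P(D)u=0$ follows immediately.

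It remains to check the Cauchy data. Setting $x_d=0$ and using $D_d^{k+s}L(h_j)(x,0)=\mathscr{C}_{k+s}(h_j)(x)$ gives
\begin{equation*}
D_d^s u(x,0) = \sum_{j=0}^{m-1}\sum_{k=0}^{m-1-j} Q_{j+k+1}(D)\,\mathscr{C}_{k+s}(h_j)(x).
\end{equation*}
Because $\mathscr{C}_l=0$ for $0\leq l\leq m-2$, the only nonvanishing contributions come from indices with $k+s\geq m-1$, which forces $k\geq m-1-s$ and hence (combined with $k\leq m-1-j$) also $j\leq s$. After this pruning the double sum becomes exactly the left-hand side of the identity in Proposition \ref{preparation boundary value}, so $D_d^s u(x,0)=h_s(x)$. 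The only mildly delicate step is the justification of the termwise application of $Q_k(D)$ to the infinite series, but this is immediate from the hypothesis that the partial sums converge in the Fr\'echet topology of $\mathscr{E}(Y\times\R)$, so the main point of the proof is simply to unwind the bookkeeping of indices and invoke the two identities that have been set up precisely for this purpose.
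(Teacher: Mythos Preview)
Your proof is correct and follows essentially the same approach as the paper: termwise differentiation to identify $D_d^k L(h_j)(\cdot,0)=\mathscr{C}_k(h_j)$, the recursion for $\mathscr{C}_l$ to obtain $P(D)L(h_j)=0$, and then the index pruning together with Proposition~\ref{preparation boundary value} for the Cauchy data. If anything, you are slightly more explicit than the paper about why termwise application of $Q_k(D)$ is justified.
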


%\begin{proof}
%	Using $Q_m=1$ and $\mathscr{C}_{m+l}(h_j)+\sum_{r=0}^{m-1}Q_r(D)\mathscr{C}_{r+l}(h_j)=0$ for all $l\in\N_0$ a direct computation shows $P(D)L(h_j)=0, 0\leq j\leq m-1$, so that $u\in\mathscr{E}_P(Y\times\R)$.

%	Next, for every $x\in Y$ and $k\in \N_0$ it holds
%	\begin{align*}
%	D_d^k L(h_j)(x,0)&=
%	\begin{cases}
%	k<m-1:&\sum_{l=m-1}^\infty\mathscr{C}_{l}(h_j)(x)\frac{(i\cdot 0)^{l-k}}{(l-k)!}=0=\mathscr{C}_{k}(h_j)(x)\\
%	k=m-1:&\sum_{l=m-1}^\infty \mathscr{C}_{l}(h_j)(x)\frac{(i\cdot 0)^{l-(m-1)}}{(l-(m-1)!}=\mathscr{C}_{m-1}(h_j)(x)\\
%	k>m-1:&\sum_{l=k}^\infty\mathscr{C}_{l}(h_j)(x)\frac{(i\cdot 0)^{l-k}}{(l-k)!}=\mathscr{C}_{k}(h_j)(x)
%	\end{cases}\\
%	&=\mathscr{C}_{k}(h_j)(x),
%	\end{align*}
%	which implies for $0\leq s\leq m-1$ and $x\in Y$
%	\begin{align*}
%	D_d^s u(x,0)&=\sum_{j=0}^{m-1}\sum_{k=0}^{m-1-j}Q_{j+k+1}(D)D_d^{k+s}L(h_j)(x,0)\\
%	&=\sum_{j=0}^{m-1}\sum_{k=0}^{m-1-j}Q_{j+k+1}(D)\mathscr{C}_{k+s}(h_j)(x)\\
%	&=\sum_{j=0}^s\sum_{k=m-1-s}^{m-1-j}Q_{j+k+1}(D)\mathscr{C}_{k+s}(h_j)(x)=h_s(x),
%	\end{align*}
%	where we have used $\mathscr{C}_{k+s}(h_j)=0$ in case of $k+s<m-1$ which is equivalent to $k<m-1-s$ and that in case of $k\geq m-1-s$ it should also hold that $m-1-s\leq m-1-j$, the latter being equivalent to $j\leq s$. Moreover, we applied Proposition \ref{preparation boundary value}.
%\end{proof}

In order to determine when the sequence $(L_n(h_j))_n$ converges in $\mathscr{E}(Y\times\R)$ we next give an explicit representation of the recursively defined operators $\mathscr{C}_{l}$.

\begin{proposition}\label{prop:explicit}
	Let $Y\subseteq\R^{d-1}$ be open. For each $l\in\N_0$ and every $f\in \mathscr{E}(Y)$ it holds
	$$\mathscr{C}_{m-1+l}(f)=\sum_{s\in\N_0^m, \sigma(s)=l}(-1)^{|s|}\binom{|s|}{s_1,\ldots,s_m}\prod_{k=1}^m Q_{m-k}^{s_k}(D)f,$$
	where $\sigma(s)=\sum_{j=1}^m js_j$.
\end{proposition}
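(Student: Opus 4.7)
The plan is to prove the identity by induction on $l \in \N_0$. The base case $l=0$ is immediate: the only $s \in \N_0^m$ with $\sigma(s)=0$ is $s=0$, which contributes $f$ on the right-hand side, matching $\mathscr{C}_{m-1}(f)=f$ on the left.

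For the inductive step with $l\geq 1$, I would first put the defining recursion into a form better adapted to the claim. Applying $\mathscr{C}_{m+l'} = -\sum_{k=0}^{m-1}Q_k(D)\mathscr{C}_{k+l'}$ to $\mathscr{C}_{m-1+l}$ and substituting $j=m-k$ gives
\begin{equation*}
\mathscr{C}_{m-1+l}(f) = -\sum_{j=1}^{m} Q_{m-j}(D)\,\mathscr{C}_{m-1+(l-j)}(f),
\end{equation*}
where, by the definition of $\mathscr{C}_i$ for $0\leq i\leq m-2$, the summands with $j>l$ vanish, so the effective range is $j\in\{1,\ldots,\min(l,m)\}$.

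Next I would apply the inductive hypothesis to each $\mathscr{C}_{m-1+(l-j)}(f)$ (permissible since $l-j<l$) and reindex the resulting double sum by setting $s:=t+e_j$, where $t$ is the multi-index arising in the inductive formula. Under this shift $\sigma(s)=\sigma(t)+j=l$, $|s|=|t|+1$, and the extra factor $Q_{m-j}(D)$ gets absorbed into $\prod_{k=1}^{m}Q_{m-k}^{s_k}(D)$; the overall minus sign is cancelled by the change $(-1)^{|t|}\mapsto(-1)^{|s|}$. The result is a sum over all $s\in\N_0^m$ with $\sigma(s)=l$ of the operator $\prod_{k=1}^{m}Q_{m-k}^{s_k}(D)f$ weighted by the combinatorial coefficient
\begin{equation*}
\sum_{j:\,s_j\geq 1}\binom{|s|-1}{s_1,\ldots,s_j-1,\ldots,s_m}.
\end{equation*}

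The only non-routine point is identifying this coefficient with $\binom{|s|}{s_1,\ldots,s_m}$, but this is precisely the standard multinomial recursion: writing the $j$-th summand as $\frac{(|s|-1)!\,s_j}{s_1!\cdots s_m!}$ and using $\sum_j s_j=|s|$ collapses the sum to $\frac{|s|!}{s_1!\cdots s_m!}$. A minor bookkeeping check is that the restriction $j\leq\min(l,m)$ in the reindexing is automatic once $s_j\geq 1$ and $\sigma(s)=l$, since then $j\leq js_j\leq l$; so no terms are lost. I do not expect any real obstacle: the argument is a direct induction whose algebraic heart is the multinomial version of Pascal's identity.
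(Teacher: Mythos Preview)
Your proposal is correct and follows the same inductive strategy as the paper: apply the recursion, invoke the inductive hypothesis, reindex via $s=t+e_j$, and collapse the coefficient using the multinomial Pascal identity. Your version is in fact slightly more streamlined than the paper's, which splits the inductive step into the two cases $l\leq m-1$ and $l>m-1$ before reuniting them via the same multinomial recursion; your observation that the terms with $j>l$ vanish automatically (and, conversely, that $s_j\geq 1$ with $\sigma(s)=l$ forces $j\leq l$) handles both cases at once.
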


\begin{proof}
	We prove the claim by induction on $l$. For $l=0$ the claim holds true since $\mathscr{C}_{m-1}(f)=f$.
	
	Next, we assume the claim to be true for all $n\leq l$. For $f\in \mathscr{E}(Y)$ we have
	$$\mathscr{C}_{m-1+(l+1)}(f)=-\sum_{k=0}^{m-1}Q_k(D)\mathscr{C}_{k+l}(f)=-\sum_{s=l}^{m-1+l}Q_{s-l}(D)\mathscr{C}_{s}(f).$$
	Now we have to distinguish two cases. As a first case we consider $l\leq m-1$. Since $\mathscr{C}_{r}(f)=0$ whenever $r<m-1$, we continue
	\begin{align}\label{induction1}
		=&-\sum_{s=l}^{m-1+l}Q_{s-l}(D)\mathscr{C}_{s}(f)=-\sum_{s=m-1}^{m-1+l}Q_{s-l}(D)\mathscr{C}_{s}(f)\nonumber\\
		=&-\sum_{n=0}^l Q_{m-1-l+n}(D)\mathscr{C}_{m-1+n}(f)\\
		=&-\sum_{n=0}^{l}Q_{m-1-l+n}(D)\Big(\sum\limits_{\substack{s\in\N_0^m,\\ \sigma(s)=n}}(-1)^{|s|}\binom{|s|}{s_1,\ldots,s_m}\prod_{k=1}^m Q_{m-k}^{s_k}(D)f\Big),\nonumber
	\end{align}
	where we have used the induction hypothesis in the last step. Now, writing $\pi_j(s):=s_j$ for $s\in\N_0^m$, we observe that $$\sigma(s)=\sum_{j=1}^mjs_j=\sum_{j=1}^m j\pi_j(s+e_{l+1-n})-(l+1-n)$$
	so that
	$$\forall\,s\in\N_0^m, 0\leq n\leq l:\,\sigma(s)=n\Leftrightarrow\sigma(s+e_{l+1-n})=l+1.$$
	Continuing with the calculation (\ref{induction1}) we obtain
	\begin{align*}
		=&-\sum_{n=0}^{l}Q_{m-1-l+n}(D)\Big(\sum_{s\in\N_0^m,\sigma(s)=n}(-1)^{|s|}\binom{|s|}{s_1,\ldots,s_m}\prod_{k=1}^m Q_{m-k}^{s_k}(D)f\Big)\\
		=&-\sum_{n=0}^l\sum\limits_{\substack{s\in\N_0^m,\\ \sigma(s)=n}}(-1)^{|s|}\binom{|s|}{s_1,\ldots,s_m} \prod_{k=1}^m  Q_{m-k}^{\pi_k(s+e_{l+1-n})}(D)f\\
		=&\sum_{n=0}^l\sum\limits_{\substack{s\in\N_0^m,\\ \sigma(s+e_{l+1-n})=l+1}}(-1)^{|s+e_{l+1-n}|}\binom{|s|}{s_1,\ldots,s_m} \prod_{k=1}^m  Q_{m-k}^{\pi_k(s+e_{l+1-n})}(D)f\\
		=&\Big(\sum_{r=1}^{l+1}\sum\limits_{\substack{s\in\N_0^m,\\ \sigma(s+e_r)=l+1}}(-1)^{|s+e_r|}\binom{|s|}{s_1,\ldots,s_m}\prod_{k=0}^{m-1}Q_{m-k}^{\pi_k(s+e_r)}(D)
		\Big)f\\
		=&\Big(\sum\limits_{\substack{t\in\N_0^m,\\ \sigma(t)=l+1}}(-1)^{|t|}\Big(\sum_{r=1}^{l+1}\binom{|t|-1}{t_1,\ldots,t_{r-1},t_r-1,t_{r+1},\ldots,t_{l+1},0\ldots,0}\Big)\prod_{k=1}^m Q_{m-k}^{t_k}(D)\Big)f,
	\end{align*} 
	where in the last step we only rearranged the summands with respect to those $s\in\N_0^m$ for which $s+e_r, 1\leq r\leq l+1,$ gives the same $t\in\N_0^m$. Summarizing, in case of $l\leq m-1$ we obtain
	\begin{align*}
		&\mathscr{C}_{m-1+(l+1)}(f)\\
		=&\Big(\sum\limits_{\substack{t\in\N_0^m,\\ \sigma(t)=l+1}}(-1)^{|t|}\Big(\sum_{r=1}^{l+1}\binom{|t|-1}{t_1,\ldots,t_{r-1},t_r-1,t_{r+1},\ldots,t_{l+1},0\ldots,0}\Big)\prod_{k=1}^m Q_{m-k}^{t_k}(D)\Big)f.
	\end{align*}
	In case of $l>m-1$ we have - using the induction hypothesis in the third step
	\begin{align*}
		&\mathscr{C}_{m-1+(l+1)}(f)=-\sum_{s=l}^{m-1+l}Q_{s-l}(D)\mathscr{C}_{s}(f)\\
		=&-\sum_{n=0}^{m-1}Q_n(D)\mathscr{C}_{l+n}(f)=-\sum_{n=0}^{m-1}Q_n(D)\mathscr{C}_{m-1+(l-m+1)+n}(f)\\
		=&-\sum_{n=0}^{m-1}Q_n(D)\Big(\sum\limits_{\substack{s\in\N_0^m,\\ \sigma(s)=l-m+1+n}}(-1)^{|s|}\binom{|s|}{s_1,\ldots,s_m}\prod_{k=1}^m Q_{m-k}^{s_k}(D)\Big)f\\
		=&\Big(\sum_{n=0}^{m-1}\sum\limits_{\substack{s\in\N_0^m,\\ \sigma(s)=l+1-(m-n)}}(-1)^{|s+e_{m-n}|}\binom{|s|}{s_1,\ldots,s_m}\prod_{k=1}^m Q_{m-k}^{\pi_k(s+e_{m-n})}(D)\Big)f\\
		=&\Big(\sum_{n=0}^{m-1}\sum\limits_{\substack{s\in\N_0^m,\\ \sigma(s+e_{m-n})=l+1}}(-1)^{|s+e_{m-n}|}\binom{|s|}{s_1,\ldots,s_m}\prod_{k=1}^m Q_{m-k}^{\pi_k(s+e_{m-n})}(D)\Big)f\\
		=&\Big(\sum_{r=1}^m \sum\limits_{\substack{s\in\N_0^m,\\ \sigma(s+e_r)=l+1}}(-1)^{|s+e_r|}\binom{|s|}{s_1,\ldots,s_m}\prod_{k=1}^m Q_{m-k}^{\pi_k(s+e_r)}(D)\Big)f\\
		=&\Big(\sum\limits_{\substack{t\in\N_0^m,\\ \sigma(t)=l+1}}(-1)^{|t|}\Big(\sum_{r=1}^m\binom{|t|-1}{t_1,\ldots,t_{r-1},t_r-1,t_{r+1},\ldots,t_m}\Big)\prod_{k=1}^m Q_{m-k}^{t_k}(D)\Big)f.
	\end{align*}
	Summarizing, whether $l\leq m-1$ or $l>m-1$, we obtain
	$$\mathscr{C}_{m-1+(l+1)}(f)=\sum\limits_{\substack{t\in\N_0^m,\\ \sigma(t)=l+1}}(-1)^{|t|}\alpha(t)\prod_{k=1}^m Q_{m-k}^{t_k}(D)f,$$
	where
	$$\alpha(t)=\begin{cases}
		l\leq m-1:&\sum_{r=1}^{l+1}\binom{|t|-1}{t_1,\ldots,t_{r-1},t_r-1,t_{r+1},\ldots,t_{l+1},0\ldots,0},\\
		l>m-1:&\sum_{r=1}^m\binom{|t|-1}{t_1,\ldots,t_{r-1},t_r-1,t_{r+1},\ldots,t_m}.
	\end{cases}$$
	Since the function
	$$M_k:\R^k\rightarrow\R, M_k(x)=\begin{cases} 0,& x\notin\N_0^k\\ \binom{|x|}{x_1,\ldots,x_k},&x\in\N_0^k\end{cases}$$
	satisfies $\sum_{r=1}^k M_k(x-e_r)=M_k(x)$ it follows
	\begin{align*}
		\sum_{r=1}^{l+1}\binom{|t|-1}{t_1,\ldots,t_{r-1},t_r-1,t_{r+1},\ldots,t_{l+1},0\ldots,0}&=\sum_{r=1}^{l+1}M_{l+1}((t_1,\ldots,t_{l+1})-e_r)\\
		&=\binom{|t|}{t_1,\ldots,t_{l+1},0,\ldots,0}
	\end{align*}
	as well as
	$$\sum_{r=1}^m\binom{|t|-1}{t_1,\ldots,t_{r-1},t_r-1,t_{r+1},\ldots,t_m}=\sum_{r=1}^m M_m((t_1,\ldots,t_m)-e_r)=\binom{|t|}{t_1,\ldots,t_m}.$$
	Taking into account that $t_{l+2}=\ldots=t_m=0$ whenever $\sigma(t)=l+1\leq m$, we finally arrive at
	$$\mathscr{C}_{m-1+(l+1)}(f)=\sum\limits_{\substack{t\in\N_0^m,\\ \sigma(t)=l+1}}(-1)^{|t|}\binom{|t|}{t_1,\ldots,t_m}\prod_{k=1}^m Q_{m-k}^{t_k}(D)f,$$
	which proves the claim for $l+1$. The proof is complete.
\end{proof}

\begin{rmk}\label{explicit formula}
	Using the explicit formula for $\mathscr{C}_{m-1+l}, l\in\N_0,$ it follows for $n\geq m-1, h\in \mathscr{E}(Y)$, and $(x,x_d)\in Y\times \R$
	$$L_n(h)(x,x_d)=\sum_{l=m-1}^n\sum\limits_{\substack{s\in\N_0^m,\\ \sigma(s)=l-m+1}}(-1)^{|s|}\binom{|s|}{s_1,\ldots,s_m}\prod_{k=1}^m Q_{m-k}^{s_k}(D)h(x) \frac{(ix_d)^l}{l!},$$
	where $\sigma(s)=\sum_{j=1}^m js_j$.
	
	Thus, if $h_0,\ldots,h_{m-1}\in\mathscr{E}(Y)$ are such that $(L_n(h_j))_{n\in\N}$ converge in $\mathscr{E}(Y\times\R)$ the solution to the Cauchy problem $P(D)u=0$ in $Y\times\R$, $D_d^ju(\cdot,0)=h_j, 0\leq j\leq m-1$ is given by
	$$u(x,x_d)=\sum_{j=0}^{m-1}\sum_{l=m-1}^\infty\sum\limits_{\substack{s\in\N_0^m,\\ \sigma(s)=l-m+1}}(-1)^{|s|}\binom{|s|}{s_1,\ldots,s_m}\prod_{k=1}^m Q_{m-k}^{s_k}(D)h_j(x) \frac{(ix_d)^l}{l!}.$$
\end{rmk}

\section{Proof of Theorem \ref{zero solution}}\label{proof zero solution}

As mentioned at the end of Section \ref{statement of the main results}, for the proof we combine a result of H\"ormander \cite[Theorem 8.6.7]{HoermanderPDO1} with an idea of Langenbruch from \cite{Langenbruch1983}. By H\"ormander's result, for $P$ with characteristic vector $N$ there is $u\in \mathscr{E}(\R^d)$ with $P(D)u=0$ and $\supp u=\{x\in\R^d;\langle x,N\rangle\leq 0\}$. The power series approach to the solution of the (non-characteristic) Cauchy problem from the previous section was used by Langenbruch to construct, for a certain class of polynomials $P$, a fundamental solution which has a bounded support with respect to some of the variables. The proof of Theorem \ref{zero solution} will be achieved by combining both results/ideas.\\

\begin{proof}[Proof of Theorem \ref{zero solution}]
	By hypothesis on the polynomial $P$, there is $\gamma\in (0,1)$ such that $\deg_{x_1}(Q_k)\leq \gamma (m-k)$ for all $0\leq k\leq m-1$. We fix $a,\varepsilon>0$ with $a>\varepsilon$. Next we fix $\rho\in (1,\frac{1}{\gamma})$ and we denote by $\Gamma^{(\rho)}(\R)$ the Gevrey class of order $\rho$, i.e.\ $\Gamma^{(\rho)}(\R)$ consists of those smooth functions $f$ on $\R$ for which for every compact $L\subset\R$ there are constants $C, R>0$ such that
	$$\forall\,x\in L, \alpha\in\N_0:\,|f^{(\alpha)}(x)|\leq C\,R^\alpha \alpha^{\rho\alpha}.$$
	Since $\rho>1$ there is $g\in\Gamma^{(\rho)}(\R)\cap\mathscr{D}(\R)$ such that $\supp g\subseteq [-(a+\varepsilon),-\frac{\varepsilon}{2}]$ and $g=1$ in a neighborhood of $[-(a+\frac{\varepsilon}{2}),-\frac{3\varepsilon}{4}]$. The existence of $g$ follows for example from an application of \cite[Theorem 1.4.2]{HoermanderPDO1}.
	
	By \cite[Proof of Theorem 8.6.7]{HoermanderPDO1} for each characteristic vector $N$ of $P$ and each non-characteristic vector $\xi$ for $P$ there is a Puiseux series $t(s)=s\sum_{j=1}^\infty c_j(s^{-1/p})^j$, analytic for $s\in\C, |s^{1/p}|>M$ for suitable $M>0$, such that for $\tau>(2M)^p$ and $1-1/p<r<1$
	$$v(x)=\int_{i\tau-\infty}^{i\tau+\infty}e^{i\langle x,sN+t(s)\xi\rangle}e^{-(s/i)^r}ds$$
	is a smooth function on $\R^d$ with $P(D)v=0$, $\supp v=\{x\in\R^d;\,\langle x,N\rangle\leq 0\}$, and such that $v_{|\{x\in\R^d;\,\langle x,N\rangle\neq 0\}}$ is real analytic, where $(s/i)^r$ is defined so that it is real and positive when $s$ is on the positive imaginary axis. Moreover, the definition of $v$ is independent of the particular choice of $\tau>(2M)^p$ and
	$$\forall\,\alpha\in\N_0^d:\,\partial^\alpha v(x)=\int_{i\tau-\infty}^{i\tau+\infty}(sN+t(s)\xi)^\alpha e^{i\langle x,sN+t(s)\xi\rangle}e^{-(s/i)^r}ds.$$
	Since by hypothesis $e_1$ is characteristic for $P$ while $e_d$ is not, in the above definition of $v$ we can choose $N=e_1$ and $\xi=e_d$ yielding a smooth function $v$ on $\R^d$ which only depends on $x_1$ and $x_d$ such that $P(D)v=0$, $\supp v=\{x\in\R^d;\,x_1\leq 0\}$, and $v_{|\{x\in\R^d;\,x_1\neq 0\}}$ is real analytic. Since real analytic functions belong to $\Gamma^{(\rho)}$ and since $\Gamma^{(\rho)}$ is an algebra which is closed under differentiation, it follows that
	$$\forall\,0\leq j\leq m-1:\,h_j:\R^{d-1}\rightarrow\C, x'\mapsto g(x'_1)D_d^j v(x',0)$$
	belong to $\Gamma^{(\rho)}(\R^{d-1})$, depend only on $x'_1$ and satisfy
	\begin{equation}\label{supports}
		\supp h_j=\{x'\in\R^{d-1};\,x'_1\in\supp g\}\subseteq\{x'\in\R^{d-1};x'_1\in[-(a+\varepsilon),-\varepsilon/2]\}.
	\end{equation}
	In particular, there are $C>0,R\geq 1$ such that
	\begin{equation}\label{useful estimate}
		\forall\,0\leq j\leq m-1\,\forall\,x'\in\R^{d-1},\alpha\in\N_0^{d-1}:\,|D^\alpha h_j(x')|\leq C R^{\alpha_1} \alpha_1^{\rho\alpha_1}.
	\end{equation}
	For every $0\leq k\leq m$ we have $Q_k(x')=\sum_{|\alpha'|\leq m-k}q_{k,\alpha'}x'^{\alpha'}$ for suitable $q_{k,\alpha'}\in\C$. We fix $q>0$ such that $\sum_{|\alpha'|\leq m-k}|q_{k,\alpha'}|\leq q$ for all $k,\alpha'$. Moreover, we observe that for every $s\in\N_0^m$ due to the hypothesis on $\deg_{x_1}(Q_k)$
	$$\deg_{x_1}\Big(\prod_{k=1}^m Q_{m-k}^{s_k}\Big)=\sum_{k=1}^m s_k\deg_{x_1}(Q_{m-k})\leq \gamma\sum_{k=1}^m k s_k=\gamma\,\sigma(s).$$
	Applying (\ref{useful estimate}) it follows that for every $0\leq j\leq m-1$ and each $s\in\N_0^m$
	$$\forall\,x'\in\R^{d-1}:\,\left|\left(\prod_{k=1}^m Q_{m-k}^{s_k}(D)\right)h_j(x')\right|\leq  q^{|s|} C R^{\gamma\sigma(s)}\big(\gamma\sigma(s)\big)^{\rho\gamma\,\sigma(s)}.$$
	Thus, for $B>0$ it follows from Remark \ref{explicit formula} that for every $n\geq m-1, k\in\N,$ and each $x'\in \R^{d-1}$, $x_d\in\R$ with $|x_d|\leq B$ we have by an application of the Multinomial Theorem and Stirling's Formula for a suitable constant $\tilde{C}$
	\begin{align*}
		&|L_{n+k}(h_j)(x',x_d)-L_n(h_j)(x',x_d)|\\
		&\leq \sum_{l=n+1}^{n+k}\sum\limits_{\substack{s\in\N_0^m,\\ \sigma(s)=l-m+1}}\binom{|s|}{s_1,\ldots,s_m}q^{|s|}CR^{\gamma\sigma(s)}\big(\gamma\sigma(s)\big)^{\rho\gamma\,\sigma(s)}\frac{B^l}{l!}\\
		&\leq \tilde{C}\sum_{l=n+1}^{n+k}(m q)^{l-m+1} R^{\gamma(l-m+1)} \big(\gamma(l-m+1)\big)^{\rho\gamma(l-m+1)}\frac{B^l}{l^l}\\
		&<\tilde{C}\sum_{l=n+1}^\infty\Big(\frac{m q R B}{l^{1-\rho\gamma}}\Big)^l<\infty,
	\end{align*}
	because $\rho\gamma<1$. Thus $(L_n(h_j))_{n\in\N}, 0\leq j\leq m-1,$ converge uniformly on $\R^{d-1}\times [-B,B]$. It is at this point where we need that $h_j\in \Gamma^{(\rho)}(\R^{d-1})$; for this to hold, we have chosen $g\in \Gamma^{(\rho)}(\R)$ since for an arbitrary cut-off function in place of $g$ it needs not be true that $h_j\in\Gamma^{(\rho)}(\R^{d-1})$.
	
	The explicit formula for $L_n(h_j)$ in Remark \ref{explicit formula} shows that for all $\alpha\in\N_0^d$ and each $(x',x_d)\in\R^d$ we have for $n\geq m-1$
	\begin{align*}
		&\partial^\alpha L_n(h_j)(x,x_d)\\
		=&\sum_{l=\max\{m-1,\alpha_d\}}^n\sum\limits_{\substack{s\in\N_0^m,\\ \sigma(s)=l-m+1}}(-1)^{|s|}\binom{|s|}{s_1,\ldots,s_m}\prod_{k=1}^m Q_{m-k}^{s_k}(D)\partial^{\alpha'}h_j(x')\frac{(ix_d)^{l-\alpha_d}}{(l-\alpha_d)!}.
	\end{align*}
	Since $\Gamma^{(\rho)}(\R^{d-1})$ is closed under differentiation, similar estimates to the ones elaborated above show that $(\partial^\alpha L_n(h_j))_{n\in\N}$ converges uniformly in $\R^{d-1}\times[-B,B]$ for every $B>0$ so that $(L_n(h_j))_{n\in\N}$ converges in $\mathscr{E}(\R^d), 0\leq j\leq m-1$. Denoting the respective limits by $L(h_j), 0\leq j\leq m-1,$ it follows from (\ref{supports}) and the explicit formula for $L_n(h_j)$ in Remark \ref{explicit formula} that for each $0\leq j\leq m-1$
	\begin{equation}\label{supports2}
		\supp L(h_j)\subseteq\{x\in\R^d;\,x_1\in[-(a+\varepsilon),-\varepsilon/2]\}.
	\end{equation}
	
	Proposition \ref{solution Cauchy} implies that the smooth function $u$ on $\R^d$ defined as
	$$u:=\sum_{j=0}^{m-1}\sum_{k=0}^{m-1-j}Q_{j+k+1}(D)D_d^{k}L(h_j)$$
	satisfies $P(D)u=0$ and $D_d^j u(x',0)=g(x'_1)D_d^j v(x',0)$ for every $0\leq j\leq m-1$. Moreover, by (\ref{supports2})
	$$\supp u \subseteq\{x\in\R^d;\,x_1\in[-(a+\varepsilon),-\varepsilon/2]\}.$$
	Since $g=1$ in a neighborhood of $[-(a+\varepsilon/2),-3\varepsilon/4]$ and since $D_d^j v(\cdot,0)$ are real analytic it follows from Holmgren's Uniqueness Theorem (see e.g.\ \cite[Section V.5.3]{HoermanderPDO} or \cite{Mizohata1973}) and the fact that $\{x\in \R^d;\,x_d=0\}$ is a non-characteristic hyperplane for $P$ that $u$ and $v$ coincide on the set $\{x\in\R^d;\,-a\leq x_1\leq -\varepsilon\}$. Since the latter set is contained in the support of $v$, the theorem is proved. 
\end{proof}

\section{Proofs of Theorem \ref{necessary condition} and Corollaries \ref{global approximation} and \ref{non-degenerate parabolic}}\label{proof necessity}

In this section we finally prove Theorem \ref{necessary condition} and Corollaries \ref{global approximation} and \ref{non-degenerate parabolic}.

\begin{proof}[Proof of Theorem \ref{necessary condition}]
	The proof will be done by contradiction. Thus, we assume that there is $c\in \R$ such that $X_2$ contains a compact connected component $C$ of $(\R^d\backslash X_1)\cap H_c$.
	
	Let $V\subseteq U\subseteq\R^{d-1}$ be open and bounded such that
	$$C\subseteq \{c\}\times V\subseteq\{c\}\times\overline{V}\subseteq\{c\}\times U\subseteq\{c\}\times\overline{U}\subseteq (X_1\cup C)\cap H_c.$$
	Then, $\{c\}\times(\overline{U}\backslash V)$ is a compact subset of $X_1\cap H_c$ and therefore $$\delta:=\dist(\{c\}\times(\overline{U}\backslash V),\R^d\backslash X_1)>0.$$
	By compactness and $X_1\cup C\subseteq X_2$ there is $\varepsilon>0$ such that
	\begin{enumerate}
		\item[i)] $[c-\varepsilon,c+\varepsilon]\times (\overline{U}\backslash V)\subseteq X_1$,
		\item[ii)] $[c-\varepsilon,c+\varepsilon]\times\overline{U}\subseteq X_2$.
	\end{enumerate}
	Applying Theorem \ref{zero solution} to the polynomial $\check{P}(x):=P(-x)$ there is $u\in \mathscr{E}_{\check{P}}(\R^d)$ with
	$$\left\{x\in\R^d;-\frac{3\varepsilon}{2}\leq x_1\leq -\frac{\varepsilon}{2}\right\}\subseteq\supp u\subseteq\{-2\varepsilon\leq x_1\leq 0\}$$
	such that $u$ is real analytic in $(-3\varepsilon/2,-\varepsilon/2)\times\R^{d-1}$. Thus, $v(x):=u(x_1-c-\varepsilon, x_2,\ldots,x_d)$ defines a smooth function on $\R^d$ satisfying $\check{P}(D)v=0$ and
	$$\left\{x\in\R^d;\,c-\frac{\varepsilon}{2}\leq x_1\leq c+\frac{\varepsilon}{2}\right\}\subseteq\supp v\subseteq\{x\in\R^d;\,c-\varepsilon\leq x_1\leq c+\varepsilon\}.$$
	Moreover, the restriction of $v$ to $(c-\varepsilon/2,c+\varepsilon/2)\times\R^{d-1}$ is real analytic.
	
	Next, we choose $\psi\in\mathscr{D}(\R^{d-1})$ with $\supp\psi\subseteq U$ and $\psi=1$ in a neighborhood of $\overline{V}$, and set $w(x):=v(x)\psi(x_2,\ldots,x_d)$. Then
	$$\supp w\subseteq\supp v\cap (\R\times\supp \psi)\subseteq [c-\varepsilon, c+\varepsilon]\times U$$
	as well as
	$$\supp\check{P}(D)w\subseteq\supp v\cap (\R\times \supp(d\psi))\subseteq[c-\varepsilon,c+\varepsilon]\times(U\backslash \overline{V}).$$
	In particular, $w\in\mathscr{D}(X_2)$ and $\check{P}(D)w\in\mathscr{D}(X_1)$. For each $f\in \mathscr{D}'_P(X_2)$ we have
	$$\langle f,\check{P}(D)w\rangle=\langle P(D)f,w\rangle =0.$$
	On the other hand, for arbitrary fixed $x_0\in C\subseteq\{c\}\times V$, since $v$ is real analytic in $(c-\varepsilon/2,c+\varepsilon/2)\times\R^{d-1}$ and the latter set is contained in the support of $v$, there is $\alpha_0\in\N_0^d$ such that $\partial^{\alpha_0}v(x_0)\neq 0$. Let $E$ be a fundamental solution for $P(D)$ then $\partial^{\alpha_0}\tau_{x_0}E_{|X_1}\in \mathscr{D}'_P(X_1)$, where $\tau_{x_0}$ denotes translation by $x_0$. Since $\check{P}(D)w\in\mathscr{D}(X_1)$ we conclude
	$$\langle\check{P}(D)w,\partial^{\alpha_0}\tau_{x_0}E_{|X_1}\rangle=\langle w,\partial^{\alpha_0}\delta_{x_0}\rangle=(-1)^{|\alpha_0|}\partial^{\alpha_0}v(x_0)\neq 0.$$
	Thus $\check{P}(D)w$ is a non-trivial continuous linear functional on $\mathscr{D}'_P(X_1)$ which vanishes on $r_{\mathscr{D}'}\Big(\mathscr{D}'_P(X_2)\Big)$ so by the Hahn-Banach Theorem the latter subspace of $\mathscr{D}'_P(X_1)$ is not dense giving the desired contradiction.
\end{proof}

\begin{proof}[Proof of Corollary \ref{global approximation}]
	By Theorem \ref{necessary condition}, i) implies ii). To prove the converse implication as well as the statement that $(X,\R^d)$ is a $P$-Runge pair, we first note that $\R^d$ is $P$-convex for supports. Thus, in view of Theorem \ref{sufficiency for P-Runge pairs}, it remains to show that $X$ is $P$-convex for supports, whenever ii) holds. Assuming that $X$ is not $P$-convex for supports, it follows from Theorem \ref{characterization p-convexity} that there are $c\in\R$ and a compact subset $K\subseteq H_c\cap X$ with
	\begin{equation}\label{contradiction}
		\min_{x\in K}\dist(x,X^c)<\min_{x\in\partial_{H_c}K}\dist(x,X^c),
	\end{equation}
	where as usual $H_c=\{x\in\R^d;\,x_1=c\}$. Let $x^0\in K$ and $y^0\in X^c$ be such that
	\[|x^0-y^0|=\min_{x\in K}\dist(x,X^c).\]
	Since $x^0\in K\subseteq H_c\cap X$ we have $H_c=H_{x_1^0}$,
	\[H_{y_1^0}=(y^0-x^0)+H_{x_1^0}\supseteq (y^0-x^0)+K,\]
	and by (\ref{contradiction})
	\[H_{y_1^0}\cap X\supseteq (y^0-x^0)+\partial_{H_{x_1^0}}K=\partial_{H_{y_1^0}}\big(y^0-x^0+K\big).\]
	The latter implies that the connected component of $(\R^d\backslash X)\cap H_{y_1^0}$ which contains $y^0$ is bounded, hence compact which gives a contradiction.
\end{proof}

It remains to prove Corollary \ref{non-degenerate parabolic}.

\begin{proof}[Proof of Corollary \ref{non-degenerate parabolic}]
	The set of zeros of the principal part of each of the polynomials $P(t,x)=(-i t)^r+Q(x)$ and $P(t,x)=-t-|x|^2$ is the time axis, i.e.\ $t$-axis, and thus the characteristic hyperplanes are the hyperplanes orthogonal to the time axis. Thus, by Theorem \ref{characterization p-convexity}, $I_1\times X_2$ and $I_2\times X_2$ are both $P$-convex for supports. Moreover, $P$ satisfies the additional hypothesis of Theorem \ref{sufficiency for P-Runge pairs}. Thus, $I_1\times X_1$ and $I_2\times X_2$ form a $P$-Runge pair if and only if for every $t_0\in R$ no compact connected component of
	$$\big(\R^{n+1}\backslash (I_1\times X_1)\big)\cap\{(t_0,x);\, x\in\R^n\}$$
	is contained in $I_2\times X_2$. The latter condition is obviously equivalent to $\R^n\backslash X_1$ not having a compact connected component in $X_2$. Thus ii) follows from Theorem \ref{sufficiency for P-Runge pairs} while i) follows from Theorem \ref{sufficiency for P-Runge pairs} once it has been taken into account that $P(t,x)=(-it)^r+Q(x)$ is hypoelliptic (cf.\ \cite[Theorem 11.1.11]{HoermanderPDO2}).
\end{proof}

\noindent \textbf{Acknowledgements.} The author is very grateful to D.\ Vogt for pointing out \cite{Langenbruch1983}.

\bibliographystyle{plain}

\end{document}